
\documentclass[12pt,reqno]{amsart}

\usepackage{amsmath, amsthm, amsopn, amssymb, enumerate}

\usepackage{latexsym }
\usepackage{graphics}
\usepackage{graphicx}
\usepackage{verbatim} 
\usepackage{hyperref} 
\usepackage[mathscr]{eucal}

\setlength{\topmargin}{0in}
\setlength{\leftmargin}{0in}
\setlength{\rightmargin}{0in}
\setlength{\evensidemargin}{0in}
\setlength{\oddsidemargin}{0in}

\setlength{\textwidth}{6.5in}
\setlength{\textheight}{8.2in}


\newtheorem{thm}{Theorem}[section]
\newtheorem{lemma}[thm]{Lemma}

\newtheorem{prop}[thm]{Proposition}

\newtheorem{example}[thm]{Example}

\newtheorem*{claim*}{Claim}

\theoremstyle{definition}

\newtheorem{defn}[thm]{Definition}



\newcommand{\ds}{\displaystyle}

\def\C{\mathcal{C}}


\def\M{\mathcal{M}}

\def\Q{\mathcal{Q}}

\def\S{\mathcal{S}}

\def\N{\mathbb{N}}

\def\vx{\mathbf{x}}
\def\vy{\mathbf{y}}
\def\vz{\mathbf{z}}

\def\le{\leqslant}
\def\ge{\geqslant}

\def\eps{\varepsilon}
\def\<{\langle}
\def\>{\rangle}


\date{\today}

\author{Gonzalo Fiz Pontiveros}

\author{Simon Griffiths}

\author{Robert Morris} 

\author{David Saxton}

\author{Jozef Skokan}

 \address{
   Gonzalo Fiz Pontiveros, Simon Griffiths, Robert Morris, David Saxton \hfill\break
    IMPA, Estrada Dona Castorina 110, Jardim Bot\^anico, Rio de Janeiro, RJ, Brasil
 }
 \email{\{gf232|sgriff|rob|saxton\}@impa.br}
 
  \address{
   Jozef Skokan \hfill\break
    Department of Mathematics, LSE, Houghton Street, London, WC2A 2AE, England
 }
 \email{j.skokan@lse.ac.uk}

\thanks{Research supported in part by: CNPq bolsas PDJ (GFP, SG, DS), a CNPq bolsa de Produtividade em Pesquisa (RM). This work was carried out during a visit of JS to IMPA in November 2012}

\begin{document}

\title{On the Ramsey number of the triangle and the cube}

\begin{abstract}
The Ramsey number $r(K_3,Q_n)$ is the smallest integer $N$ such that every red-blue colouring of the edges of the complete graph $K_N$ contains either a red $n$-dimensional hypercube, or a blue triangle. Almost thirty years ago, Burr and Erd\H{o}s conjectured that $r(K_3,Q_n) = 2^{n+1} - 1$ for every $n \in \N$, but the first non-trivial upper bound was obtained only recently, by Conlon, Fox, Lee and Sudakov, who proved that $r(K_3,Q_n) \le 7000 \cdot 2^n$. Here we show that $r(K_3,Q_n) = \big( 1 + o(1) \big) 2^{n+1}$ as $n \to \infty$.
\end{abstract}

\maketitle


\pagestyle{myheadings}
\markboth{}{}
\thispagestyle{empty}
\section{Introduction}
 
In 1983, Burr and Erd\H{o}s~\cite{BE} began the systematic study of the Ramsey numbers of small cliques and large sparse graphs; that is, the study of which graphs must occur in an $n$-vertex graph with no independent set of (constant) size $s$.  Their paper contained many conjectures and open problems, all but one of which have now been resolved (see~\cite{NR}). In this paper we take an important step towards resolving the single outstanding open question, by determining asymptotically the Ramsey number of the triangle and the hypercube. 
  
It follows from the classical theorem of Ramsey~\cite{Ramsey} that for any graphs $G$ and $H$, there exists an integer $N$ such that every red-blue colouring of the edges of the complete graph $K_N$ contains either a blue copy of $G$, or a red copy of $H$. The minimum such $N$ is called the \emph{Ramsey number of $G$ and $H$}, and is denoted $r(G,H)$. The problem of determining Ramsey numbers is among the most extensively-studied and notoriously difficult in Combinatorics. 

One special case of this general problem in which substantial progress has been made is when $G$ is a clique of some fixed size $s$, and $H$ is a large, fairly sparse graph. One of the first significant results of this type was obtained by Chv\'atal~\cite{Ch}, who proved that $r(K_s,T) = (s - 1)(n - 1) + 1$ for every $s \in \N$ and every tree $T$ on $n$ vertices. The lower bound in this statement is easy to see, and in fact holds in much greater generality: indeed, a collection of $s-1$ disjoint red cliques, each of size $n-1$, contains no connected red graph on $n$ vertices, and no blue graph $G$ of chromatic number $s$, and hence
\begin{equation}\label{eq:def:sgood}
r(K_s,H) \, \ge \, (s - 1)\big( v(H) - 1 \big) + 1
\end{equation}
for any connected graph $H$. Motivated by this construction and by Chv\'atal's theorem, Burr and Erd\H{o}s~\cite{BE} conjectured that equality holds in~\eqref{eq:def:sgood} for every fixed $s$ and all sufficiently sparse connected graphs $H$; in particular, for all such $H$ with bounded average degree. This conjecture was disproved by Brandt~\cite{Br}, who showed that there exist bounded degree graphs~$H$ with $r(K_3,H) > c \cdot v(H)$ for $c$ arbitrarily large. Furthermore, he showed that the conjecture fails (even in the case $s = 3$) for every large $d$-regular graph with sufficient expansion properties, and hence that it fails for almost every $d$-regular graph. On the other hand, it was shown by Burr and Erd\H{o}s~\cite{BE} that we have equality in~\eqref{eq:def:sgood} for every graph $H$ of bounded bandwidth\footnote{The bandwidth of a graph $H$ is defined to be the minimum $\ell \in \N$ for which there exists an ordering $v_1,\ldots, v_n$ of the vertices of $H$ such that every edge $v_iv_j$ satisfies $|i-j| \le \ell$.} and by Allen, Brightwell and Skokan~\cite{ABS} that the conjecture holds for every bounded degree graph of bandwidth $o(n)$. It follows that we have a natural rough dichotomy in the case of bounded degree graphs: equality holds in~\eqref{eq:def:sgood} for those graphs which have poor expansion properties, and fails otherwise.

For graphs with unbounded degrees much less is known. The strongest results obtained so far are due to Nikiforov and Rousseau~\cite{NR}, who proved the conjecture of Burr and Erd\H{o}s for $O(1)$-degenerate graphs which may be disconnected into components of size $o(n)$ by removing $n^{1-\eps}$ vertices, for some $\eps > 0$. As was observed in~\cite{CFLS}, together with the `separator theorem' of Alon, Seymour and Thomas~\cite{AST} this implies that equality holds in~\eqref{eq:def:sgood} for every sufficiently large graph which avoids a given minor, and hence for every sufficiently large planar graph. We remark that in fact the main results of both~\cite{ABS} and~\cite{NR} are considerably more general than those stated above; in particular, in~\cite{ABS} one may replace $K_s$ by any bounded size graph\footnote{To be precise, for more general graphs $G$, one must replace the term $1$ in~\eqref{eq:def:sgood} by $\sigma(G)$, the size of the smallest colour class in any $s$-colouring of $G$.} of chromatic number $s$, and in~\cite{NR} one may replace it by a very large collection of overlapping copies of~$K_s$. 

In this paper we shall study a specific family of graphs which not only have vertices of unbounded degree, but have unbounded \emph{average} degree. In doing so, we shall improve a recent result from~\cite{CFLS}, and take a significant step towards resolving the only question from~\cite{BE} left open by the work of~\cite{NR}. Let $Q_n$ denote the $n$-dimensional hypercube, i.e., the graph with vertex set $\{0,1\}^n$ and edges between pairs of vertices which differ in exactly one coordinate. This important family of graphs appears naturally in many different contexts, and its properties have been extensively-studied, including those relating to Ramsey Theory. For example, it is a long-standing conjecture of Burr and Erd\H{o}s that $r(Q_n,Q_n) = O(2^n)$, but the best known bounds (see~\cite{Con} and~\cite{FS}) are roughly the square of this function. 

We shall consider the problem, first proposed in~\cite{BE}, of determining the Ramsey numbers of cliques and hypercubes. It is straightforward to show (via a greedy embedding) that $r(K_s,Q_n) = O\big( n^{s-2} \cdot 2^n \big)$ for every fixed $s \in \N$, and this was essentially the best known upper bound until recently, when Conlon, Fox, Lee and Sudakov~\cite{CFLS} proved that
$$r(K_s,Q_n) \, \le \, C(s) \cdot 2^n$$
and hence determined $r(K_s,Q_n)$ up to a constant factor. In the concluding remarks of~\cite{CFLS}, the authors make the following comment: ``It would be of great interest to know whether the methods of this paper could be improved to give an approximate result of the form $r(K_s,Q_n) = \big( s-1+o(1) \big) \cdot 2^n$, even in the case of triangles. Such a result would likely be a necessary first step in resolving the original question of Burr and Erd\H{o}s." The following theorem provides this first step. 

\begin{thm}\label{Ramsey}
\[
 r(K_3,Q_n) \, = \, \big( 1 + o(1) \big) 2^{n+1}
\]
as $n \to \infty$.
\end{thm}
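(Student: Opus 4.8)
\medskip
\noindent\emph{Reduction.} Fix $\eps>0$ small and let $N=\lceil (1+\eps)2^{n+1}\rceil$. Given a red/blue colouring of $E(K_N)$ with no blue triangle, let $G$ be the graph of red edges; the absence of a blue triangle says precisely that $\alpha(G)\le 2$, i.e.\ that the complement $\ol G$ is triangle-free. Together with the lower bound~\eqref{eq:def:sgood} (two disjoint red cliques on $2^n-1$ vertices), it therefore suffices to show that \emph{every triangle-free graph $\ol G$ on $N=(2+o(1))2^n$ vertices satisfies $Q_n\subseteq G$}. Two regimes are immediate. If some $v$ has $\deg_{\ol G}(v)\ge 2^n$ then $N_{\ol G}(v)$ is independent in $\ol G$, hence a clique of $G$ of size $\ge 2^n$, and $Q_n\subseteq K_{2^n}$; so we may assume $\Delta(\ol G)<2^n$, i.e.\ $\delta(G)\ge N-2^n=(1+o(1))2^n$. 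And if moreover $\Delta(\ol G)\le 2^n/n$, then the intersection of any $n$ neighbourhoods in $G$ has size at least $N-n-2^n>2^n$, so, ordering $V(Q_n)=\{0,1\}^n$ by value — an ordering in which every vertex has at most $n$ earlier neighbours — one embeds $Q_n$ into $G$ greedily, at each step choosing an image in the common neighbourhood of the already placed earlier neighbours and outside the fewer than $2^n$ used vertices. Thus we may assume $2^n/n<\Delta(\ol G)<2^n$.

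\medskip
\noindent\emph{Structure in the main regime.} The plan is to extract a bounded-complexity approximation of $\ol G$. Since $\ol G$ is triangle-free with no independent set of size $\Theta(N)$, a structural/stability analysis of triangle-free graphs — in the spirit of the triangle removal lemma together with the structure of dense triangle-free graphs, or of Ramsey--Tur\'an theory — should place $\ol G$, after deletion of $o(N)$ vertices, within $o(N^2)$ edges of a blow-up of a triangle-free graph $R$ on $t$ vertices with $t$ a constant and parts $V_1,\dots,V_t$ of nearly equal size $\sim N/t$. Equivalently, $G$ is then close to the blow-up of $\ol R$ in which each $V_i$ spans a clique and each $\ol R$-edge between parts becomes a complete bipartite graph. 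If $R$ is bipartite, then a union of parts spans, in $G$, a clique minus $o(N^2)$ edges on at least $N/2-o(N)\ge 2^n$ vertices; discarding from it the few vertices of large defect leaves a clique minus a bounded-degree graph, into which $Q_n$ embeds greedily as above. So we may assume $R$ is not bipartite.

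\medskip
\noindent\emph{Folding the cube around a cycle.} This is the crux of the embedding. If $R$ is triangle-free and not bipartite then $\ol R$ is connected — a disconnected $\ol R$ would make $R$ bipartite — and $\alpha(\ol R)=\omega(R)\le 2$; and a connected graph with $\alpha\le 2$ whose complement is not bipartite is Hamiltonian (by the Chv\'atal--Erd\H{o}s condition any non-Hamiltonian such graph has a cut vertex, and then, as $\alpha\le 2$ forbids long chains of blocks, a block analysis shows its complement is bipartite). Hence $\ol R$ contains a spanning cycle $C_t$. The map $\varphi\colon Q_n\to C_t$ sending $v$ to $(\text{Hamming weight of }v)\bmod t$ is a graph homomorphism, since an edge of $Q_n$ changes the weight by exactly one; and for fixed $t$ and $n\to\infty$ each fibre $\varphi^{-1}(r)=\{v\in\{0,1\}^n:\ \mathrm{wt}(v)\equiv r\ (\mathrm{mod}\ t)\}$ has size $(1+o(1))2^n/t$, comfortably below the part size $\sim N/t=(2+o(1))2^n/t$. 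We embed $Q_n$ by sending $\varphi^{-1}(r)$ injectively into the part at the $r$-th vertex of $C_t$: an edge of $Q_n$ inside a fibre must be realised inside a clique part, while an edge between two fibres joins consecutive vertices of $C_t$, hence $\ol R$-adjacent parts, which are complete to one another in $G$. The $o(N^2)$ defects — inside parts and between consecutive parts — are absorbed by first discarding from each part the $o(N)$ vertices of large defect and then routing each fibre's (maximum degree at most $n$) edges greedily through the clean structure that remains.

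\medskip
\noindent\emph{The main obstacle.} The genuine difficulty lies in the quantitative control needed in the structural step. A black-box application of the Szemer\'edi regularity lemma yields a blow-up approximation whose number of parts $t$ is a tower-type function of the error parameter; but absorbing the defects requires the defect to be $o(N^2/n)$, which forces the error parameter to decay with $n$ and hence makes $t$ grow far faster than $n$ — whereas the folding argument needs $t$ bounded independently of $n$: when $t\gg\sqrt{n}$ the fibres near weights $0$ and $t\lfloor n/t\rfloor$ become $\Theta(\sqrt n)$ times too large to fit in a part. Thus regularity cannot be used as a black box; the technical heart must be a direct combinatorial derivation — from the triangle-freeness and the absence of a large independent set in $\ol G$, via a clustering of vertices according to their $\ol G$-neighbourhoods together with a Ramsey--Tur\'an-type estimate — of the near-blow-up structure with $t$ an absolute constant and with polynomially small defect, so that every error term is genuinely $o(1)$ relative to $2^{n+1}$ and the threshold comes out as $(1+o(1))2^{n+1}$ rather than $O(2^n)$. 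Two smaller ingredients also need to be nailed down: the graph-theoretic lemma on connected graphs with $\alpha\le 2$ used above, and the uniform estimate $\sum_{j\equiv r\ (\mathrm{mod}\ t)}\binom{n}{j}=(1+o(1))2^n/t$.
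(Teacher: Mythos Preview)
Your write-up is a proof sketch, not a proof: you yourself flag the central step as unproved in the ``Main obstacle'' paragraph. Unfortunately the hoped-for structural lemma --- that any triangle-free $\ol G$ on $N$ vertices with $\Delta(\ol G)<N/2$ and $\alpha(\ol G)<(1-\eps)N/2$ is, after deleting $o(N)$ vertices, within $o(N^2/n)$ of a blow-up of a triangle-free $R$ on $O_\eps(1)$ vertices --- is false. Take $R$ to be a triangle-free ``Ramsey'' graph on $t=t(n)\to\infty$ vertices with $\alpha(R)=O(\sqrt{t\log t})$ and (by Andr\'asfai--Erd\H{o}s--S\'os, since $R$ is non-bipartite) $\Delta(R)\le 2t/5$, and let $\ol G$ be its balanced blow-up. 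Then $\Delta(\ol G)\le 2N/5<N/2$ and $\alpha(\ol G)=o(N)$, yet $\ol G$ is not within $o(N^2)$ of any blow-up on a bounded number of parts (its quotient has chromatic number growing with $t$). So the obstruction you identified is genuine and not merely quantitative: no amount of ``direct combinatorial derivation'' will produce the bounded-$t$ blow-up you need, and hence the cycle-folding step that relies on it cannot be reached.

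The paper avoids this by not asking for a global bounded-complexity picture at all. It works on the \emph{red} side directly: since $G_B$ is triangle-free, every blue neighbourhood is a red clique, and if a vertex has $\ge s$ blue neighbours in each of two red cliques then those cliques are linked by a red $K_{s,s}$. Iterating, one peels off ``$(m,s)$-snakes'' (chains of large red cliques joined by red $K_{s,s}$'s) and is left with a set $C$ in which the blue graph has small average degree. If $C$ is large, a refinement of the Conlon--Fox--Lee--Sudakov argument embeds $Q_n$ in $G_R[C]$; if the snakes cover most of $V(G)$, one embeds $Q_n$ snake by snake using the low bandwidth of $Q_n$ (your Example-style observation that $Q_n\subset (P_{2^n})^{2\binom{n}{n/2}}$). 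The number of cliques here is $O(\log\log n)$, not $O(1)$, but the argument never needs the global defect to be $o(N^2/n)$: the cliques are \emph{exact} red cliques extracted from blue neighbourhoods, and the only cross-errors that matter are into snakes already processed, which are controlled pairwise by construction.
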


Our techniques do not appear to generalize easily to the case of larger cliques. Nevertheless, it seems likely that many of the ideas introduced below can be used in this more general context, and we plan to return to this topic in a future work. 

The strategy we shall use in order to prove Theorem~\ref{Ramsey} is roughly as follows. Given a two-colouring of $E(K_N)$ which contains no blue triangles, we will show (in Section~\ref{sec:decomposition}) that it can be split into two parts: a part with very few blue edges, and a part which consists of red \emph{$(m,s)$-snakes} (collections of $m$-cliques connected by copies of $K_{s,s}$) with very few blue edges between them. The larger of these two parts has at least $(1 + \gamma)2^n$ vertices; we shall show (in Sections~\ref{sec:dense_embed} and~\ref{sec:snake_embed}) how to find a red copy of $Q_n$ in that part. 

Indeed, in Section~\ref{sec:dense_embed} we shall adapt the technique introduced in~\cite{CFLS} in order to show that if the density of blue edges is sufficiently low (at most $1 / \log \log \log n$, say), then we can efficiently embed the hypercube $Q_n$ in $G_R$. Roughly speaking, the idea is to find a collection of disjoint red cliques, each associated with a subcube of $Q_n$, using the low blue density (and some simple double-counting) to ensure that there are few blue edges between cliques corresponding to adjacent subcubes. We shall then be able to greedily embed each subcube into its associated clique. Once we can no longer find any more red cliques, it will follow immediately that every vertex sends only $o(2^n/n)$ blue edges into the remaining vertices, and so we can complete the embedding greedily.  

On the other hand, we shall show in Section~\ref{sec:snake_embed} that any subset of a hypercube can be embedded in an $(m,s)$-snake, as long as $m$ and $s$ are sufficiently large. This embedding, which was inspired by an argument used in~\cite{ABS}, uses the fact that the bandwidth of $Q_n$ is $O(2^n / \sqrt{n})$. In fact, we shall need a slightly more technical statement (see Proposition~\ref{prop:snake}) which will allow us to avoid the blue neighbourhoods of vertices already embedded in other snakes, but the proof of this statement is not much more difficult.

Finally, combining the results of Sections~\ref{sec:dense_embed},~\ref{sec:snake_embed} and~\ref{sec:decomposition}, it is straightforward to prove Theorem~\ref{Ramsey}; we do so in Section~\ref{proofSec}.

\medskip
\noindent \textbf{Notation:} If~$G$ is a two-coloured complete graph, then we take the colours to be red and blue, and write $G_R$ and $G_B$ for the graphs formed by the red and blue edge sets respectively. We also write $N_B(u)$, $d_B(u)$ and $e_B(X,Y)$ for the neighbourhood and degree of a vertex in $G_B$, and the number of blue edges with one endpoint in $X$ and the other in $Y$, and similarly for $G_R$.  Throughout the paper, $\log$ denotes $\log_2$, and $\log_{(k)}$ denotes the $k^{th}$ iterated logarithm, so $\log_{(1)}(n)= \log(n)$ and $\log_{(k+1)}(n) = \log\big( \log_{(k)} (n) \big)$. We shall also omit irrelevant ceiling and floor symbols, and trust that this will cause the reader no confusion.

\section{An embedding lemma for dense red colourings}
\label{sec:dense_embed}

In this section we shall adapt the method of Conlon, Fox, Lee and Sudakov~\cite{CFLS} to prove the following proposition. 

\begin{prop}\label{prop:dense_embed}
Given any $\gamma > 0$ and $k \in \N$, there exists $n_0=n_0(\gamma,k)$ such that the following holds whenever $n\ge n_0$. If $H$ is a two-coloured complete graph on $(1 + \gamma)2^n$ vertices with no blue triangles and
\begin{equation}\label{eq:densebluedegrees}
d_B(u) \, \le \, \frac{2^n}{\log_{(k)} (n)}
\end{equation}
for every $u \in V(H)$, then $Q_n \subset H_R$.
\end{prop}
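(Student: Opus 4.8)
The plan is to embed $Q_n$ into $H_R$ by a two-phase strategy, following the clique-absorption idea of Conlon, Fox, Lee and Sudakov~\cite{CFLS} but pushing the efficiency to within a $(1+o(1))$ factor. First I would fix auxiliary parameters: a subcube dimension $d = d(n) \to \infty$ slowly (say $d \approx \log_{(k+2)} n$, so that $2^d$ is much smaller than $\log_{(k)} n$ but still tends to infinity), and think of $Q_n$ as a disjoint union of $2^{n-d}$ copies of $Q_d$, indexed by the vertices of $Q_{n-d}$, with edges of a perfect matching between the copies corresponding to each edge of $Q_{n-d}$. Phase one is an iterative procedure: as long as the current set $W$ of unused vertices is large, find inside $W$ a red clique $K$ of size exactly $2^d$. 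Here "large" should mean $|W| \ge 2^{n-d}\cdot 2^d / (\text{something} \to \infty)$ is false — more precisely, we run the loop while we can still find such a red clique, and assign each found clique $K$ to a not-yet-assigned vertex $x \in V(Q_{n-d})$, greedily embedding the copy $Q_d^{(x)}$ into $K$ (which is trivial since $K$ is complete red on $2^d = |V(Q_d)|$ vertices). The existence of a red clique of size $2^d$ in any red-dense set follows from Ramsey's theorem together with the no-blue-triangle hypothesis: a blue-triangle-free graph on $t$ vertices has independence number (in blue) at least $\sqrt{t}$, i.e. a red clique of size $\sqrt{t}$, so any $W$ with $|W| \ge 4^{2^d}$ contains a red $2^d$-clique, and $4^{2^d}$ is negligible compared to $2^n$ for our choice of $d$.

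The key quantitative point, and the step I expect to be the main obstacle, is controlling the blue edges \emph{between} cliques assigned to adjacent vertices of $Q_{n-d}$, so that the matchings of $Q_n$ can actually be realized in red. Two cliques $K_x, K_{x'}$ with $xx' \in E(Q_{n-d})$ each have $2^d$ vertices; I need a red perfect matching between them. Since $d_B(u) \le 2^n/\log_{(k)}n$ for all $u$, the number of blue edges incident to $K_x$ is at most $2^d \cdot 2^n / \log_{(k)} n$; averaged over the (at least) $\Omega(2^{n-d})$ other cliques, a typical clique $K_{x'}$ sees only $O(2^{2d} \cdot 2^d / \log_{(k)} n) = o(2^d)$ blue edges to $K_x$, so by König's theorem (or a greedy/Hall argument) there is a red perfect matching between them. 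The difficulty is that I need this simultaneously for \emph{all} edges of $Q_{n-d}$, not just typical ones; this is handled by a double-counting / defect argument: we bound the total number of "bad" pairs (cliques with too many blue edges between them) and assign the subcubes $Q_d^{(x)}$ to cliques so as to avoid bad pairs being adjacent — since $Q_{n-d}$ has max degree $n-d \ll$ the number of cliques, a greedy assignment (or a random one followed by a deletion/swap) suffices. One must be careful that embedding $Q_d^{(x)}$ into $K_x$ is compatible with the matchings to all $n-d$ neighbours at once; since each matching is a bijection $K_x \to K_{x'}$, and the cube's cross-edges between $Q_d^{(x)}$ and $Q_d^{(x')}$ also form a perfect matching, we need the composition of "cube-matching then clique-matching" to be consistent — this is automatic once we have a red perfect matching for each adjacent pair, because we may first fix an arbitrary bijection $Q_d^{(x)} \to K_x$ for each $x$ and then the required cross-edges are prescribed; so the real content is just the existence of enough red cross-edges, which reduces to the degree condition.

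Phase two handles the leftover vertices. When the loop terminates, the remaining set $W_0$ contains no red clique of size $2^d$, hence (again by the blue-triangle-free bound, now in reverse) $|W_0| < 4^{2^d} = o(2^n/n)$; more importantly, the vertices of $Q_{n-d}$ not yet assigned a clique number at most $o(2^{n-d})$ — here I would choose $d$ and the stopping threshold so that the number of unassigned subcubes times $2^d$ is $o(2^n/n)$. These remaining $o(2^n/n)$ vertices of $Q_n$ I embed greedily one at a time into the at least $\gamma 2^n - o(2^n)$ still-unused vertices of $H$: when embedding a vertex $v$ of $Q_n$ with $\le n$ already-embedded neighbours, I must land on a vertex of $H$ red-adjacent to all of their images; the set of forbidden vertices has size at most $n \cdot \max_u d_B(u) \le n \cdot 2^n/\log_{(k)} n = o(2^n)$, so a valid choice always exists provided the surplus $\gamma 2^n$ outweighs this — which it does since $1/\log_{(k)} n \to 0$. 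Assembling: Phase one embeds all but $o(2^n/n)$ vertices with all cube-edges red, Phase two completes the embedding, giving $Q_n \subseteq H_R$. The only place the hypothesis $(1+\gamma)2^n$ (rather than exactly $2^n$) is used is to guarantee the surplus in Phase two and slack for the "bad pair" deletions in Phase one; the place $\log_{(k)}$ (rather than, say, $\log$) enters is that we need $2^d \to \infty$ while $2^d = o(\log_{(k)} n)$, forcing $d$ as small as $\log_{(k+2)} n$-ish, which is fine.
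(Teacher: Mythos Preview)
Your Phase~2 greedy step contains a fatal arithmetic error that hides the main difficulty of the proposition. You claim that the set of vertices forbidden to a cube vertex with at most $n$ already-embedded neighbours has size at most
\[
n \cdot \max_u d_B(u) \;\le\; \frac{n \cdot 2^n}{\log_{(k)} n} \;=\; o(2^n),
\]
but $n/\log_{(k)} n \to \infty$ for every fixed $k \ge 1$, so this quantity is in fact $\gg 2^n$ and swamps the $\gamma 2^n$ surplus. The hypothesis~\eqref{eq:densebluedegrees} is simply too weak to run a naive greedy finish. The paper's proof is built precisely to overcome this: red cliques are found not via the Ramsey bound but as blue neighbourhoods $N_B(u) \cap A$ of vertices $u$ with large blue degree into the current active set $A$. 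The payoff is that when no such $u$ exists, \emph{every} vertex has small blue degree into $A$. Iterating this (Lemma~\ref{lem:partialass:extend}) with a ladder of thresholds $b_0 < b_1 < \dots < b_{k+1}$, where $b_j = 3\log_{(k-j+2)} n$, drives the blue degree into the residual set $C$ down from $2^n/\log_{(k)} n$ all the way to $2^{n-b_{k+1}} \le \gamma 2^n / n$, which is exactly the bound needed for the greedy step. Your Ramsey-based clique extraction carries no information about blue degrees into the leftover set, so you have no mechanism to reach this threshold.

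There is a second, independent gap in Phase~1. You assert that once a red perfect matching exists between each adjacent pair $(K_x, K_{x'})$, consistency ``is automatic \dots\ because we may first fix an arbitrary bijection $Q_d^{(x)} \to K_x$ for each $x$ and then the required cross-edges are prescribed''. But fixing the bijections first prescribes a \emph{specific} matching between $K_x$ and $K_{x'}$ for each edge $xx'$ of $Q_{n-d}$, and a Hall/K\"onig argument only gives you \emph{some} red perfect matching, not the prescribed one; with cliques of size exactly $2^d$ there is no slack to repair this. The paper sidesteps the issue by taking cliques $S_i$ of size $(1+\gamma)2^{n-d_i}$, strictly larger than the subcubes, together with the per-vertex bound~\eqref{eq:def:partialass} on blue degree from later cliques into earlier ones; the subcubes are then embedded greedily in reverse order (Lemma~\ref{lem:partialass:embed}), and the $\gamma$-slack absorbs the at most $d_i$ forbidden sets of size $\gamma 2^{n-d_i}/d_i$ each.
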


To save repetition, let us fix (for the rest of this section) a two-coloured complete graph $H$ with at least $(1 + 3\gamma)2^n$ vertices\footnote{Since $\gamma > 0$ is arbitrary, it is clearly sufficient to consider such an $H$.} and no blue triangles, where $0 < \gamma < 1/3$ and $k \in \N$ are fixed, and $n \in \N$ is sufficiently large. Let us assume also that $H$ satisfies~\eqref{eq:densebluedegrees}. 

We begin by introducing some notation. For each $d \ge 0$ and $\vx=(x_1,\dots, x_d) \in \{0,1\}^d$, let 
\[
Q_{\vx} \, = \, \big\{ (y_1,y_2,\dots,y_n) \in \{0,1\}^n \,:\, y_i = x_i \mbox{ for each } 1 \le i \le d \big\}
\]
denote the subcube of $Q_n$ consisting of points whose \emph{initial} coordinates\footnote{We shall call such a subcube an \emph{initial subcube} of $Q_n$.} agree with $\vx$. We call $d$ the \emph{co-dimension} of $Q_{\vx}$. Notice that  if $Q_{\vx}$ has co-dimension $d$, then every vertex $v\in V(Q_{\vx})$ has exactly $d$ neighbours in $V(Q_n) \setminus V(Q_{\vx})$.

We say that disjoint subcubes $Q$ and $Q'$ are \emph{adjacent} if there exist points $v\in V(Q)$ and $v'\in V(Q')$ that are adjacent in $Q_n$. Given two vectors $\vx \in \{0,1\}^d$ and $\vz \in \{0,1\}^{d'}$, 
we write 
$$d(\vx,\vz) \, = \, \sum_{i=1}^{\min\{d,d'\}} |x_i - z_i|$$ 
for the distance between $\vx$ and $\vz$. 
The advantage of using initial subcubes is, as observed in~\cite{CFLS}, that we can easily describe when two subcubes are disjoint or adjacent: the subcubes $Q_{\vx}$ and $Q_{\vz}$ are disjoint if and only if $d(\vx,\vz) >0$, and are adjacent if and only if~$d(\vx,\vz)= 1$. Let us also write $\vx \sim \vz$ if~$d(\vx,\vz)= 1$. 

\begin{defn}[Partial assignment of the cube]\label{def:partialass}
Let $m \in \N$, and suppose that we are given:
\begin{itemize}
\item[$(a)$] A sequence of integers, $0 \le d_1 \le \cdots \le d_m \le n$. \smallskip
\item[$(b)$] Disjoint sets $S_1, \ldots, S_m \subset V(H)$ of size $|S_i| = (1+\gamma)2^{n-d_i}$. \smallskip
\item[$(c)$] Vectors $\vx_i \in \{0,1\}^{d_i}$ for each $i \in [m]$, such that the subcubes $Q_{\vx_i}$ are disjoint. 
\end{itemize}
Then we say that $\big( d_i,S_i,\vx_i \big)_{i=1}^m$ is a {\em partial assignment of the cube} $Q_n$ into $H$ if each set $S_i$ induces a red clique in $H$, and moreover 
\begin{equation}\label{eq:def:partialass}
  |N_B(v) \cap S_i| \le \frac{\gamma}{d_i} \cdot 2^{n-d_i} \mbox{ for each pair $i<j$ with $\vx_i \sim \vx_j$ and every $v \in S_j$.}
\end{equation}
\end{defn}

Given a collection $\{Q_{\vx_1},\ldots,Q_{\vx_m}\}$ of subcubes of $Q_n$ as in the definition above, we shall write $\bigcup_{i=1}^m Q_{\vx_i}$ to denote the subgraph of $Q_n$ induced by the vertices $\bigcup_{i=1}^m V(Q_{\vx_i})$. The following lemma motivates Definition~\ref{def:partialass}.

\begin{lemma}\label{lem:partialass:embed}
Let $\big( d_i,S_i,\vx_i \big)_{i=1}^m$ be a partial assignment of the cube $Q_n$ into $H$. Then there exists an embedding
$$\varphi \,\colon \bigcup_{i=1}^m V(Q_{\vx_i}) \to V(H)$$
of $\bigcup_{i=1}^m Q_{\vx_i}$ into $H_R$, such that $\varphi\big(V(Q_{\vx_i}) \big) \subset S_i$ for each $i \in [m]$.
\end{lemma}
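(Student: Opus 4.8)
<br>

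The plan is to embed the subcubes one at a time, in order $i = 1, 2, \ldots, m$, maintaining the invariant that the partial embedding of $\bigcup_{i=1}^m Q_{\vx_i}$ so far is red and respects $\varphi(V(Q_{\vx_i})) \subset S_i$. Since the sets $S_i$ are disjoint and each induces a red clique, the only real issue is matching up the edges of $Q_n$ that run \emph{between} distinct subcubes $Q_{\vx_i}$ and $Q_{\vx_j}$: such edges exist precisely when $\vx_i \sim \vx_j$, and they must be mapped to red edges of $H$.

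The key structural observation is this: if $\vx_i \sim \vx_j$ (say with $i < j$ and $d_i \le d_j$), then the edges of $Q_n$ between $Q_{\vx_i}$ and $Q_{\vx_j}$ form a perfect matching between a subcube of $Q_{\vx_i}$ of co-dimension $d_j$ (namely $Q_{\vx_j'}$ where $\vx_j'$ is $\vx_j$ with its $i$-th coordinate flipped — wait, more carefully, the neighbours in $Q_{\vx_j}$ of vertices of $Q_{\vx_i}$) and all of $Q_{\vx_j}$; in other words, each vertex of $Q_{\vx_j}$ has exactly one neighbour in $Q_{\vx_i}$, and these neighbours are distinct. Hence when we come to embed $Q_{\vx_j}$, each already-embedded vertex $\varphi(w)$ with $w \in Q_{\vx_i}$ forbids at most one vertex of $Q_{\vx_j}$ from being placed on any particular vertex of $S_j$; more usefully, for a vertex $u \in V(Q_{\vx_j})$, the vertices of $S_j$ we must avoid are exactly the blue-neighbours in $S_j$ of the (at most) $d_j$ already-placed images of the neighbours of $u$ lying outside $Q_{\vx_j}$. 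Here I would use the bound~\eqref{eq:def:partialass}: summing over the at most $d_j$ such neighbours (one for each of the $\le d_j$ values of $i < j$ with $\vx_i \sim \vx_j$), the number of forbidden vertices of $S_j$ is at most $d_j \cdot \frac{\gamma}{d_j} 2^{n-d_j} = \gamma \cdot 2^{n-d_j}$, leaving at least $|S_j| - \gamma 2^{n-d_j} = 2^{n-d_j} > 0$ valid choices.

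So the embedding of $Q_{\vx_j}$ into $S_j$ proceeds greedily, processing the $2^{n-d_j}$ vertices of $Q_{\vx_j}$ in any order: when placing a vertex $u$, we must pick an image in $S_j$ that (i) is not already used for an earlier vertex of $Q_{\vx_j}$, (ii) is red-adjacent to the images of the already-placed neighbours of $u$ within $Q_{\vx_j}$, and (iii) is red-adjacent to the already-placed images of the neighbours of $u$ outside $Q_{\vx_j}$. Constraint (ii) is automatic since $S_j$ is a red clique, so it imposes nothing beyond (i); constraint (i) forbids at most $2^{n-d_j} - 1$ vertices, but that is not quite enough slack on its own — one must instead do the counting per-vertex-of-$Q_{\vx_j}$ as above. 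The correct way to organize it: fix a bijection between $V(Q_{\vx_j})$ and a suitable subset of $S_j$ by a greedy/Hall-type argument where the "bad" set for each $u \in V(Q_{\vx_j})$ (from constraint (iii)) has size $\le \gamma 2^{n-d_j}$, and then observe that any injection avoiding these bad sets works because of the red-clique property. A clean implementation is to build the injection vertex by vertex: having placed $t < 2^{n-d_j}$ vertices of $Q_{\vx_j}$, when placing the $(t+1)$-st vertex $u$ the forbidden set is the $\le \gamma 2^{n-d_j}$ vertices ruled out by (iii) together with the $\le t$ vertices already used, so there remain at least $(1+\gamma)2^{n-d_j} - \gamma 2^{n-d_j} - t = 2^{n-d_j} - t \ge 1$ admissible choices.

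The main obstacle, and the only place any care is needed, is verifying the combinatorial claim that the neighbours outside $Q_{\vx_j}$ of a vertex $u \in V(Q_{\vx_j})$ lie in at most $d_j$ distinct subcubes among the $Q_{\vx_i}$ — equivalently, that each such neighbour lies in a uniquely determined earlier subcube, and the relevant count of blue-neighbours is governed by~\eqref{eq:def:partialass} rather than being multiplied up. This follows from the fact that $Q_{\vx_j}$ has co-dimension $d_j$, so $u$ has exactly $d_j$ neighbours in $V(Q_n) \setminus V(Q_{\vx_j})$; each such neighbour $w$ either lies in some $Q_{\vx_i}$ with $\vx_i \sim \vx_j$ (in which case it is already embedded, contributing a bad set of size $\le \frac{\gamma}{d_j}2^{n-d_j}$ to $u$ via~\eqref{eq:def:partialass}), or lies in no subcube of the assignment (in which case it has not yet been embedded and imposes no constraint). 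Summing these $\le d_j$ contributions gives the bound $\gamma \cdot 2^{n-d_j}$ and completes the argument. One should also double-check the degenerate case $d_j = 0$, where $Q_{\vx_j} = Q_n$, $m = 1$, and the statement is just that a red clique on $(1+\gamma)2^n \ge 2^n$ vertices contains $Q_n$, which is immediate.
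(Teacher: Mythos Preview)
There is a genuine gap: you process the subcubes in the order $1,2,\ldots,m$, but the hypothesis~\eqref{eq:def:partialass} is \emph{asymmetric} in $i$ and $j$. It says that for $i<j$ with $\vx_i\sim\vx_j$ and $v\in S_j$, one has $|N_B(v)\cap S_i|\le \frac{\gamma}{d_i}2^{n-d_i}$; that is, it bounds the blue degree of a vertex of the \emph{later} set $S_j$ into the \emph{earlier} set $S_i$, and the bound is expressed in terms of $d_i$. When you embed $Q_{\vx_j}$ after $Q_{\vx_i}$ and try to place $u\in V(Q_{\vx_j})$, what you actually need is a bound on $|N_B(\varphi(w))\cap S_j|$ for $\varphi(w)\in S_i$, i.e., the blue degree of a vertex of the earlier set into the later set. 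Nothing in the definition gives you this; your claimed bound $\frac{\gamma}{d_j}2^{n-d_j}$ simply does not follow from~\eqref{eq:def:partialass}. (And this asymmetry is not an accident: in the construction of a partial assignment, one only checks blue degrees of the \emph{new} set $S_{m+1}$ into the existing $S_i$'s, not the reverse.)

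The remedy is exactly what the paper does: embed in the \emph{reverse} order $i=m,m-1,\ldots,1$. Then, when you reach $Q_{\vx_i}$, each vertex $u\in V(Q_{\vx_i})$ has exactly $d_i$ neighbours in $V(Q_n)\setminus V(Q_{\vx_i})$, each already-embedded one lies in some $S_j$ with $j>i$ and $\vx_j\sim\vx_i$, and now~\eqref{eq:def:partialass} applies directly to give at most $\frac{\gamma}{d_i}2^{n-d_i}$ forbidden vertices in $S_i$ per neighbour, hence at most $\gamma\,2^{n-d_i}$ in total. Your greedy count $|S_i|-\gamma 2^{n-d_i}-t\ge 2^{n-d_i}-t\ge 1$ then goes through verbatim. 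So the architecture of your argument is right; only the processing order needs to be flipped so that the one-sided hypothesis points the correct way.
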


\begin{proof}
The strategy is simply to greedily embed each $Q_{\vx_i}$ into $S_i$ in turn, in the order $i = m,\ldots,1$. Since each set $S_i$ induces a clique in $H_R$, it is obvious that we can embed $Q_{\vx_m}$ into $S_m$. So suppose that we have successfully embedded $Q_{\vx_m}, \ldots, Q_{\vx_{i+1}}$ into $S_m, \ldots, S_{i+1}$. In order to embed $Q_{\vx_i}$ into $S_i$, we use the key observation that every vertex $v \in V(Q_{\vx_i})$ has $d_i$ neighbours in $V(Q_n) \setminus V(Q_{\vx_i})$, and thus $v$ has at most $d_i$ neighbours which are already embedded into $V(H)$. Moreover, each of these neighbours is embedded in a set $S_j$ with $j > i$ and $\vx_j \sim \vx_i$, and so, by~\eqref{eq:def:partialass}, each of these vertices has at most $\gamma 2^{n-d_i} / d_i$ blue neighbours in $S_i$. Since $S_i$ induces a red clique and 
\[
|S_i| \, \ge \, |Q_{\vx_i}| + \frac{\gamma}{d_i} \cdot 2^{n-d_i} \cdot d_i,
\]
it follows that we may embed the vertices of $Q_{\vx_i}$ into $S_i$ one by one, since there will be at least one available vertex at each step. Since $i \in [m-1]$ was arbitrary, this completes the proof of the lemma.
\end{proof}

The next lemma allows us to partition the vertices of $H$ into three sets: a partial assignment of $Q_n$ into $H$, a set which receives very low blue degree from every vertex of $G$, and a small set which we can discard. The reader should think of $a$ as $\log_{(j)} (n)$ for some $1 < j \le k$, and $b$ as roughly $\gamma \cdot 2^{a/2}$.

\begin{lemma}\label{lem:partialass:extend}
Let $\big( d_i,S_i,\vx_i \big)_{i=1}^m$ be a partial assignment of the cube $Q_n$ into $G$, let $a \ge 1$ and $d_m \le b \le n$ be integers, and let $A \subset V(H)$ be disjoint from $\bigcup_{i=1}^m S_i$. Suppose that $\bigcup_{i=1}^m Q_{\vx_i} \ne Q_n$, and that 
\begin{equation}\label{eq:bluedegintoA}
|N_B(v) \cap A| \le 2^{n-a}
\end{equation}
for every $v \in \bigcup_{i=1}^m S_i$. Then one of the following holds:
\begin{itemize}
 \item The partial assignment $\big( d_i,S_i,\vx_i \big)_{i=1}^m$ can be extended to $\big( d_i,S_i,\vx_i \big)_{i=1}^{m+1}$ with $d_{m+1}=b$ and $S_{m+1} \subset A$.
 \item There exists a set $C \subset A$ of size
$$|C| \, \ge \, |A| - \frac{b^2}{\gamma} \cdot 2^{n-a+1}$$
such that $|N_B(v) \cap C| < 2^{n-b+1}$ for every $v \in V(H)$.
\end{itemize}
\end{lemma}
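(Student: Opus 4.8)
The plan is to try to extend the partial assignment by a greedy/averaging argument, and show that if this fails we can extract the large set $C$. Concretely, we want to find a new subcube $Q_{\vx_{m+1}}$ of co-dimension $b$ and a red clique $S_{m+1}\subset A$ of size $(1+\gamma)2^{n-b}$ satisfying the blue-degree condition~\eqref{eq:def:partialass}. First I would identify which vectors $\vx\in\{0,1\}^b$ are \emph{admissible}: since the subcubes $Q_{\vx_i}$ are disjoint and $d_i\le b$, the condition $d(\vx,\vx_i)\ge 1$ rules out at most one value of $\vx$ per index $i$ (those agreeing with $\vx_i$ on the first $d_i$ coordinates), so almost every $\vx$ gives a subcube disjoint from all the $Q_{\vx_i}$; in particular at least one admissible $\vx$ exists once $2^b>m$, and actually we should check that the $\vx_i$ with $\vx_i\sim\vx$ are few — this is where $b$ being the common new co-dimension helps control~\eqref{eq:def:partialass}. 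Having fixed an admissible $\vx=\vx_{m+1}$, I need a red clique inside $A$ of size $(1+\gamma)2^{n-b}$ all of whose vertices have few blue neighbours in each $S_i$ with $\vx_i\sim \vx$.

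The mechanism for producing the clique is the standard no-blue-triangle trick: if $G_B$ has no triangle, then for any vertex $v$ the blue neighbourhood $N_B(v)$ is a red clique. So I would look for a vertex $v\in V(H)$ with $|N_B(v)\cap A|$ large — of size at least $(1+\gamma)2^{n-b}$ — and then $N_B(v)\cap A$ is a red clique of the required size sitting inside $A$. To also guarantee the blue-degree condition toward the earlier $S_i$, I would restrict $A$ first: delete from $A$ every vertex $w$ that has more than $\frac{\gamma}{b}2^{n-b}$ blue neighbours in some $S_i$ with $i\le m$; since each $|S_i|=(1+\gamma)2^{n-d_i}$ and the total blue degree from $S_i$ into $A$ is at most $|S_i|\cdot 2^{n-a}$ by~\eqref{eq:bluedegintoA}, the number of deleted vertices per $S_i$ is at most roughly $\frac{b}{\gamma}2^{n-d_i}\cdot 2^{n-a}/2^{n-b}=\frac{b}{\gamma}2^{n-a}\cdot 2^{b-d_i}$, and summing over $i$ (using $d_i\le b$ and that the $Q_{\vx_i}$ relevant here are limited) gives a loss bounded by about $\frac{b^2}{\gamma}2^{n-a+1}$ — matching the bound claimed for $|A\setminus C|$. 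Call the surviving set $A'$, so $|A'|\ge |A|-\frac{b^2}{\gamma}2^{n-a+1}$.

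Now the dichotomy: either some vertex $v$ has $|N_B(v)\cap A'|\ge (1+\gamma)2^{n-b}$, in which case $S_{m+1}:=N_B(v)\cap A'$ (trimmed to exactly $(1+\gamma)2^{n-b}$ vertices) is a red clique in $A$, disjoint from the $S_i$, with the required blue-degree bound~\eqref{eq:def:partialass} holding by construction of $A'$ — giving the first alternative; or else every vertex $v\in V(H)$ has $|N_B(v)\cap A'|<(1+\gamma)2^{n-b}$. In the latter case I would set $C:=A'$ and just need $(1+\gamma)2^{n-b}\le 2^{n-b+1}$, which holds since $\gamma<1$ — giving the second alternative with the stated size bound. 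The main obstacle I expect is the bookkeeping in the trimming step: getting the count of vertices removed from $A$ to come out to exactly $\frac{b^2}{\gamma}2^{n-a+1}$ requires being careful about which indices $i$ contribute (one should only need to protect against the $S_i$ that will end up adjacent to $Q_{\vx_{m+1}}$, but protecting against all of them is cleaner and the crude bound $\sum_i 2^{b-d_i}\le b\cdot 2^b$ combined with $|S_i|\le 2\cdot 2^{n-d_i}$ must be managed so the $b^2$ rather than $b^3$ appears), and making sure the choice of admissible $\vx$ is compatible with disjointness of the subcubes — but both are elementary once set up correctly.
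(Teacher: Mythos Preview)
Your overall strategy is exactly the paper's: fix an admissible $\vy\in\{0,1\}^b$, delete from $A$ the vertices with too many blue neighbours in the relevant $S_i$, and then run the dichotomy (large blue neighbourhood in the remainder gives a red clique $S_{m+1}$; otherwise set $C$ equal to the remainder). Two details in your plan need fixing, however.

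First, your existence argument for an admissible $\vy$ is not quite right: the condition $d(\vy,\vx_i)=0$ does not rule out ``at most one value of $\vy$ per index $i$'' --- it rules out $2^{b-d_i}$ values (all $\vy$ extending $\vx_i$). The correct reason an admissible $\vy$ exists is simply that $V(Q_n)\setminus\bigcup_i V(Q_{\vx_i})$ is non-empty and, since $b\ge d_i$ for all $i$, it is a union of initial subcubes of co-dimension $b$.

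Second --- and this is the real gap --- your trimming threshold $\frac{\gamma}{b}2^{n-b}$ is the wrong one, and with it the bookkeeping genuinely does not close. What the definition of a partial assignment actually requires is $|N_B(v)\cap S_i|\le \frac{\gamma}{d_i}2^{n-d_i}$, so that is the threshold you should use for each $i$ (only for the at most $b$ indices $i$ with $\vx_i\sim\vy$). With this per-$i$ threshold, a double count using~\eqref{eq:bluedegintoA} gives
\[
\frac{\gamma}{d_i}2^{n-d_i}\cdot|A(i)| \;\le\; e_B(A(i),S_i) \;\le\; 2^{n-a}|S_i| \;\le\; 2^{2n-a-d_i+1},
\]
so $|A(i)|\le \frac{d_i}{\gamma}2^{n-a+1}\le \frac{b}{\gamma}2^{n-a+1}$, and summing over the $\le b$ relevant indices yields exactly $\frac{b^2}{\gamma}2^{n-a+1}$. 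Your uniform threshold $\frac{\gamma}{b}2^{n-b}$ instead gives $|A(i)|\lesssim \frac{b}{\gamma}2^{n-a}\cdot 2^{b-d_i}$, and there is no way to control $\sum_{i:\vx_i\sim\vy} 2^{b-d_i}$ by $O(b)$ in general (a single $\vx_i$ with $d_i=1$ already contributes $2^{b-1}$). This is precisely the ``$b^2$ versus $b^3$'' worry you flagged; the resolution is not cleverer summation but simply using the threshold the definition hands you.
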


\begin{proof}
We claim first that there exists $\vy \in \{0,1\}^{b}$ such that $Q_{\vy}$ is disjoint from $\bigcup_{i=1}^m Q_{\vx_i}$. Indeed, this follows since the set $U := V(Q_n) \setminus \bigcup_{i=1}^m V(Q_{\vx_i})$ is non-empty and $b \ge d_m \ge d_i$ for every $i \in [m]$, and thus $U$ can be partitioned into initial subcubes of co-dimension $b$. Now, for each $i \in [m]$ such that $\vx_i \sim \vy$, let us define the set of high-degree vertices into $S_i$ by
\[
 A(i) \, := \, \bigg\{ v \in A \,:\, |N_B(v) \cap S_i| \ge \frac{\gamma}{d_i} \cdot 2^{n-d_i} \bigg\},
\]
and set 
$$A' \, = \, \bigcup_{i \,:\, \vx_i \sim \vy} A(i).$$ 
Observe that 
$$\frac{\gamma}{d_i} \cdot 2^{n-d_i} \cdot |A(i)| \, \le \, e_B\big( A(i),S_i \big) \, \le \, 2^{n-a} |S_i| \, \le \, 2^{2n - a - d_i + 1},$$
where the first inequality follows from the definition of $A(i)$, the second follows from the condition~\eqref{eq:bluedegintoA}, and the third since $|S_i| \le 2^{n - d_i + 1}$. Furthermore $|\{i : \vx_i \sim \vy\}| \le b$, since $Q_{\vy}$ has co-dimension $b$, and so
\[
 |A'| \, \le \, \sum_{i \,:\, \vx_i \sim \vy} |A(i)| \, \le \, b \cdot \frac{d_m}{\gamma} \cdot 2^{n-a+1} \, \le \, \frac{b^2}{\gamma} \cdot 2^{n-a+1}.
\]
Set $C = A \setminus A'$. If $|N_B(v) \cap C| < 2^{n-b+1}$ for every $v \in V(H)$ then we are done, so assume that there exists a vertex $u \in V(H)$ with $|N_B(u) \cap C| \ge 2^{n-b+1}$. Since $G_B$ is triangle-free, it follows that $N_B(u) \cap C$ induces a clique in $H_R$, so let $S_{m+1}$ be an arbitrary subset of $N_B(u) \cap C$ of size $(1+\gamma)2^{n-b}$, and let $\vx_{m+1} = \vy$. By the definition of $A'$, it follows that 
$$|N_B(v) \cap S_i| < \frac{\gamma}{d_i} \cdot 2^{n-d_i}$$
for every $v \in S_{m+1} \subset C = A \setminus A'$ and every $i \in [m]$ with $\vx_i \sim \vx_{m+1}$. Hence $\big( d_i,S_i,\vx_i \big)_{i=1}^m$ can be extended to $\big(d_i, S_i, \vx_i \big)_{i=1}^{m+1}$ with $d_{m+1}=b$ and $S_{m+1} \subset A$, as required.
\end{proof}

Our strategy to find a copy of~$Q_n$ in $H_R$ is now straightforward:
\begin{itemize}
\item[$(i)$] Use Lemma~\ref{lem:partialass:extend} to find a partial assignment $\big(d_i, S_i,\vx_i \big)_{i=1}^m$ of the cube $Q_n$ into $H$, together with a large set $C \subset V(G)$ disjoint from $\bigcup_{i=1}^m S_i$, such that every vertex of $H$ has small blue degree into~$C$. \smallskip
\item[$(ii)$] Embed the graph $\bigcup_{i=1}^m Q_{\vx_i}$ into $\bigcup_{i=1}^m S_i$ using Lemma~\ref{lem:partialass:embed}. \smallskip
\item[$(iii)$] Extend this embedding, using the fact that the blue degrees into~$C$ are all small. 
\end{itemize}
Now we make these steps precise.

\begin{proof}[Proof of Proposition~\ref{prop:dense_embed}]
We fix the sequence $1 \le b_0 \le \ldots \le b_{k+1}$, where $b_j = 3 \log_{(k-j+2)}(n)$ for each $0 \le j \le k+1$, and assume that $n$ is large enough to have
$$d_B(u) \, \le \, \frac{2^n}{\log_{(k)}(n)} \, \le \, 2^{n-b_0}$$ 
for every $u \in V(H)$, 
$$\gamma^2 \cdot 2^{b_{j-1}} \,=\, \gamma^2 \cdot \big( \log_{(k-j+2)}(n) \big)^3 \,\ge\, 8 (k+1) \cdot b_{j}^2$$ 
for every $1 \le j \le k+1$, and 
$$\gamma \cdot 2^{b_{k+1}} \, = \, \gamma \cdot n^3 \, \ge \, n.$$ 
We begin by constructing a partial assignment $\big(d_i, S_i, \vx_i \big)_{i=1}^m$ and a set $C \subset V(H) \setminus \bigcup_{i=1}^m S_i$ as in Step~$(i)$ of the sketch above. This assignment will have $d_i \in \{ b_1+1, \ldots, b_{k+1}+1 \}$ for every~$i \in [m]$, and the blue degree condition will be
\[
 |N_B(v) \cap C| \le \frac{\gamma \cdot 2^n}{n} \qquad \mbox{ for every } v \in V(H).
\]
To obtain such a partial assignment, we repeatedly apply Lemma~\ref{lem:partialass:extend}. More precisely, we perform the following algorithm: 
\begin{itemize}
\item[0.] Set $j = 1$, $\ell = 0$ and $A = V(H)$. Repeat the following until STOP. \smallskip
\item[1.] If $\bigcup_{i=1}^\ell Q_{\vx_i} = Q_n$ or $j > k + 1$, then set $m = \ell$ and $C = A$, and STOP. \smallskip
\item[2.] Apply Lemma~\ref{lem:partialass:extend} to $\big(d_i, S_i, \vx_i \big)_{i=1}^{\ell}$ and $A$, with $a =  b_{j-1}$ and $b = b_j + 1$. \smallskip
  \begin{itemize}
  \item[$(a)$] If we obtain a partial assignment $\big(d_i, S_i, \vx_i \big)_{i=1}^{\ell+1}$ of the cube $Q_n$ into $H$,  with $d_{\ell+1} = b$ and $S_{\ell+1} \subset A$, then set $A := A \setminus S_{\ell+1}$ and $\ell := \ell + 1$, and repeat Step~1.\smallskip
  \item[$(b)$] Otherwise, we obtain a set $C_j \subset A$ with at least $|A| - (b_j^2 / \gamma) \cdot 2^{n-b_{j-1}+3}$ elements, with the property that $|N_B(v) \cap C_j| < 2^{n-b_j}$ for every $v \in V(H)$. In this case we set $A := C_j$ and $j := j+1$, and repeat Step 1. 
  \end{itemize}
\end{itemize}
We need to check that the conditions of Lemma~\ref{lem:partialass:extend} are always satisfied by $H$, with the parameters given in the algorithm. Indeed, we clearly have $a = b_{j-1} \ge 1$ and $d_\ell \le b_j + 1 \le n$ (since $1 \le b_0 \le \ldots \le b_{k+1}$), $\bigcup_{i=1}^\ell Q_{\vx_i} \ne Q_n$ by Step~1, and $A$ is disjoint from $\bigcup_{i=1}^\ell S_i$ by Step~3$(a)$. To see that~\eqref{eq:bluedegintoA} holds, observe that $A \subset C_{j-1}$ (where $C_0 = V(H)$), and so
$$|N_B(v) \cap A| \,\le\, |N_B(v) \cap C_{j-1}| \,\le\, 2^{n-b_{j-1}} \, = \, 2^{n-a}$$
for every $v \in V(H)$, by construction. 

By Lemma~\ref{lem:partialass:extend}, it follows that the algorithm runs as claimed. Hence $\big(d_i, S_i, \vx_i \big)_{i=1}^m$ is a partial assignment, and either $\bigcup_{i=1}^\ell Q_{\vx_i} = Q_n$ or $C \subset V(H) \setminus \bigcup_{i=1}^m S_i$ has the property that
$$|N_B(v) \cap C| \, \le \, 2^{n-b_{k+1}} \, \le \, \frac{\gamma \cdot 2^n}{n}$$
for every $v \in V(H)$. Moreover, we have
\begin{align}\label{eq:sizeofC}
|C| & \, \ge \, v(G) \, - \, \sum_{i=1}^m |S_i| \, - \, \sum_{j=1}^{k+1} \frac{b_j^2}{\gamma} \cdot 2^{n-b_{j-1}+3} \nonumber \\
& \, \ge \, (1+3\gamma)2^n  \, - \, (1+\gamma) \sum_{i=1}^m 2^{n-d_i} \, - \, \gamma 2^n \, = \, (1+\gamma) \bigg( 2^n - \sum_{i=1}^m 2^{n-d_i} \bigg) + \gamma 2^n,
\end{align}
since $8(k+1) \cdot b_{j}^2 \le \gamma^2 \cdot 2^{b_{j-1}}$ for each $1 \le j \le k+1$. 

Now, let us use the partial assignment $\big(d_i, S_i, \vx_i \big)_{i=1}^m$ and the set $C$ in order to embed the cube into~$H_R$, as in Steps $(ii)$ and $(iii)$ of the sketch. Indeed, by Lemma~\ref{lem:partialass:embed} there exists a partial embedding 
$$\varphi \,\colon \bigcup_{i=1}^m V(Q_{\vx_i}) \to V(H)$$
of $\bigcup_{i=1}^m Q_{x_i}$ into $H_R$, such that $\varphi\big(V(Q_{\vx_i}) \big) \subset S_i$ for each $i \in [m]$. We shall embed the remainder of $Q_n$ into $C$ greedily. Indeed, let
$$V(Q_n) \,\setminus\, \bigcup_{i=1}^m V(Q_{\vx_i}) \,=\, \big\{ q_1, \ldots, q_s \big\},$$ 
and suppose that we have embedded $q_1, \ldots, q_t$ into~$C$, and wish to embed $q_{t+1}$. The vertex $q_{t+1} \in V(Q_n)$ has at most~$n$ neighbours which are already embedded, each of which has at most $\gamma 2^n / n$ blue neighbours in $C$. Hence, by~\eqref{eq:sizeofC}, and since $t < s = 2^n - \sum_{i=1}^m 2^{n-d_i}$, it follows that there are at least 
$$|C| - t - \gamma 2^n \, > \, 0$$ 
choices for where to embed $q_{t+1}$, as required. This completes the embedding of the cube to $H_R$, and hence proves Proposition~\ref{prop:dense_embed}.
\end{proof}

\section{An embedding lemma into snakes}
\label{sec:snake_embed}

In this section we shall utilize the relatively low bandwidth of the cube $Q_n$, in order to embed it into (any member of) a family of graphs which arise naturally in the proof of Theorem~\ref{Ramsey} (see Section~\ref{sec:decomposition}), and which we term \emph{snakes}. We shall first define these graphs, and state our main result, and then provide some motivation. 

\begin{defn}
\label{def:snake}
Given a graph $G$, we say that a collection $\S = \{ M_1, \ldots, M_k \}$ of disjoint $m$-sets\footnote{We shall also think of $\S$ as a graph. Thus, abusing notation slightly, we shall write $V(\S) = \bigcup_{M \in \S} M$ for the vertex set of the $(m,s)$-snake $\S$.}  $M_j \subset V(G)$ is an \emph{$(m,s)$-snake} if the graph $H_\S(s)$ with vertex set $\S$ and edge set
$$E\big( H_\S(s) \big) \, = \, \bigg\{ \big\{ M,M' \big\} \in {\S \choose 2} \,:\, K_{s,s} \subset G[M,M'] \bigg\}$$
is connected, and $G[M]$ is a clique for every $M \in \S$.
\end{defn}

The aim of this section is to prove the following proposition.

\begin{prop}\label{prop:snake}
Let $n,m,s,\Delta \in \N$, and let $\S$ be an $(m,s)$-snake in a graph~$G$. Let $Q \subset Q_n$, and for each $x \in Q$, let $D_x \subset V(\S)$ be a ``forbidden'' set of size $|D_x| \le \Delta$. If
\begin{equation}\label{eq:prop:snake:conditions}
m \, \ge \, \frac{v(Q)}{|\S|} + s + \Delta \qquad \text{and} \qquad  s \, \ge \, 2\Delta + 8 \cdot |\S| \cdot \binom{n}{n/2},
\end{equation}
then there exists an embedding $\varphi \colon Q \to G\big[ V(\S) \big]$ such that $\varphi(x) \not\in D_x$ for every $x \in Q$.
\end{prop}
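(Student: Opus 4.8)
The plan is to embed $Q$ into the snake $\S$ one "layer at a time" along the bandwidth ordering of $Q_n$, distributing the vertices of $Q$ among the $m$-sets $M_1,\dots,M_k$ in a way that respects both the clique/$K_{s,s}$ structure of the snake and the forbidden sets $D_x$. First I would fix a spanning tree $T$ of the auxiliary graph $H_\S(s)$ and root it; this gives a natural order in which to "fill" the sets $M_j$. The key combinatorial input is that the bandwidth of $Q_n$ is $O(2^n/\sqrt n)$, so there is an ordering $x_1,\dots,x_{v(Q)}$ of (a superset of) the vertices of $Q$ in which every edge joins two vertices whose indices differ by at most $w := O\!\left(2^n/\sqrt n\right) = O\!\big(\binom{n}{n/2}\big)$ — in fact one can take $w \le \binom{n}{n/2}$ up to constants, which is exactly why this quantity appears in~\eqref{eq:prop:snake:conditions}.

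The main step is a greedy assignment procedure. Walk through the sets $M_1,\dots,M_k$ in the tree order; into each $M_j$ we place a contiguous block of the ordering $x_1,\dots,x_{v(Q)}$ of length roughly $v(Q)/|\S|$, reserving a "buffer" of $s$ vertices in each $M_j$ near its boundary with the next set so that consecutive blocks overlap appropriately in the cube's ordering. Concretely, when we move from $M_j$ to its child $M_{j'}$ in $T$, the last $w$ vertices assigned to $M_j$ must also have their cube-neighbours that fall in $M_{j'}$ available; since $K_{s,s} \subset G[M_j,M_{j'}]$ and $s \ge 2\Delta + 8|\S|\binom{n}{n/2} \ge 2w + 2\Delta$ (for suitable absolute constants hidden in the bandwidth bound), there is enough room in the $K_{s,s}$ on the $M_{j'}$ side to host the "interface" vertices while still avoiding the at most $\Delta$ forbidden vertices per cube-vertex. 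Within a single $M_j$, all edges of $Q$ among vertices assigned there are automatically present because $G[M_j]$ is a clique, so the only constraints are (i) edges crossing between $M_j$ and an adjacent $M_{j'}$, handled via the $K_{s,s}$'s, and (ii) the forbidden sets, handled by the slack $2\Delta$ in $s$ and the slack $\Delta$ in $m$.

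Let me make the count precise in the proposal: the number of vertices we must fit into each $M_j$ is at most $\big($its block length$\big) + \big($incoming interface from its parent$\big) \le v(Q)/|\S| + w \le v(Q)/|\S| + s$, and we additionally need $\Delta$ spare vertices to dodge forbidden sets, which is exactly the hypothesis $m \ge v(Q)/|\S| + s + \Delta$. On the $K_{s,s}$ side, each interface vertex of $Q$ (there are at most $w \le 4|\S|\binom{n}{n/2}$ of them across all tree-edges, but at most $w$ per edge) needs one of the $s$ vertices on its side of the biclique, minus at most $\Delta$ forbidden and at most $w$ already-used interface slots, leaving $s - \Delta - w > 0$ by the second hypothesis; so the greedy choice never gets stuck. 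One carries out the embedding by induction on the tree $T$, processing a set $M_j$ only after its parent, and at each step embedding the new block of cube-vertices into the free part of $M_j$, then embedding the interface vertices toward each child into the appropriate side of the corresponding $K_{s,s}$.

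I expect the main obstacle to be bookkeeping the interfaces when a node $M_j$ of $T$ has several children: the same block boundary of the cube ordering can only be adjacent (in the tree) to one next set, so one must be slightly careful that the decomposition of the cube-ordering into blocks is aligned with a \emph{path} through $T$, not the whole tree — equivalently, one should first choose a Hamilton path in a subdivision of $T$, or simply observe that $H_\S(s)$ being connected lets us pick a spanning \emph{walk} visiting each $M_j$, and lay the blocks along that walk (revisiting a set is harmless, since we only ever add vertices to it and the clique property persists). Verifying that the total demand on any single $M_j$ from all its visits still fits in $m$, using $m \ge v(Q)/|\S| + s + \Delta$ together with the fact that a spanning walk of a tree on $|\S|$ vertices visits each vertex $O(1)$ times on average but possibly $\deg$ times at worst, is the one place where the argument needs genuine care; I would resolve it by choosing the walk to be an Euler tour of a doubled spanning tree and charging each block to a distinct tree-edge, so that each $M_j$ receives blocks summing to at most $(\deg_T(M_j))\cdot(\text{small}) $, and then either assume $\S$ has been pre-processed to have bounded degree in $H_\S(s)$ or absorb the degree factor into the constant $8$ in~\eqref{eq:prop:snake:conditions}.
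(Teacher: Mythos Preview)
Your overall plan---bandwidth ordering of $Q$, spanning tree of $H_\S(s)$, walk along an Euler tour of the doubled tree, embed contiguous pieces of the ordering as you go---is exactly the paper's approach. The gap is precisely where you flag it: handling a node $M_j$ that the walk visits many times. Your proposed fixes do not work. ``Charging each block to a tree-edge'' still puts $\deg_T(M_j)$ blocks into $M_j$; if those blocks have size of order $v(Q)/|\S|$ (as you said earlier), then a star centre would need capacity of order $v(Q)$, which $m$ need not provide. ``Absorbing the degree factor into the constant $8$'' cannot help, since that constant lives in the lower bound on $s$, not on $m$, and the shortfall is in $m$. And you are not allowed to assume $H_\S(s)$ has bounded degree.

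The missing idea is to decouple ``transit'' visits from the single ``terminal'' visit. In the paper's proof one sets $t = s/(4|\S|)$ and, at every step of the closed walk $(w_0,\dots,w_{2k})$, embeds only $t$ vertices into the outgoing side $X_j\subset M_{w_j}$ of the $K_{s,s}$ and $t$ into the incoming side $Y_j\subset M_{w_{j+1}}$. Since the walk has $2k$ steps, the total number of vertices ever placed via these interface sets is at most $4kt = s$, so no $X_j$ or $Y_j$ overflows and the forbidden sets (size $\le\Delta$) are easily avoided because $s - 2kt \ge s/2 > \Delta$. The bulk of $Q$ is dumped into $M_{w_j}$ only on the \emph{last} visit to $w_j$: at that point one greedily fills $M_{w_j}\setminus(X_j\cup D_x)$ until it is exhausted. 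This guarantees that when the algorithm halts, every $M_j$ has at most $s+\Delta$ unused vertices, whence $v(Q) > |\S|(m-s-\Delta)$ would contradict the hypothesis on $m$. The point is that revisits cost only $O(t)$ each, summing to at most $s$ over the whole walk, independently of the degree sequence of the tree; this is what your block-per-edge accounting misses.
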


Before proving Proposition~\ref{prop:snake}, let us motivate the statement with a couple of simple examples. We shall write $(P_q)^t$ for the $t^{th}$ power of the path $P_q$, i.e., the graph with vertex set $[q]$ and edge set
\[
E\big( (P_q)^t \big) \, =\, \bigg\{ \big\{ i, j \big\} \in {[q] \choose 2} \,:\, \big| i - j \big| \le t \bigg\}.
\]

\begin{example}\label{ex:pw} 
Let $Q \subset Q_n$ be a subgraph of the cube, and let $t \ge 2\binom{n}{n/2}$. Then
$$Q\, \subset \, (P_q)^t,$$
where $q = v(Q)$. 
\end{example}

\begin{proof}
Choose an order $x_1 < \dots < x_q$ of the vertices of $Q$ such that the size of the corresponding subsets of $[n]$ is increasing. Note that if $x_i$ and $x_j$ are adjacent in $Q$ then there exists $k \in [n]$ such that $x_i$ belongs to level $k-1$ and $x_j$ to level $k$. By the choice of the ordering, it follows that
\[
| i - j | \, \le\, \binom{n}{k} + \binom{n}{k-1} \, \le \, 2\binom{n}{n/2} \, \le \, t,
\]
and hence $Q \subset (P_q)^t$, as required.
\end{proof}

The example above motivates the main idea of this section: that if we wish to embed a subset of $Q_n$ in a snake, we should first find a power of a path. Here is a slightly more complicated example.  

\begin{example} 
Let $s \ge 2\binom{n}{n/2}$, and suppose that the graph $G$ is composed of two (disjoint) cliques, each of size $2^{n-1}$, connected by a copy of $K_{s,s}$. Then $Q_n \subset G$.
\end{example}

\begin{proof}
By Example~\ref{ex:pw}, it will suffice to find a copy of $(P_{2^n})^s$ in $G$. Let $V(G) = A \cup B$, where $G[A]$ and $G[B]$ are both cliques and $|A| = |B| = 2^{n-1}$, and let $X \subset A$ and $Y \subset B$ be such that $G[X,Y] = K_{s,s}$. Then any ordering of the vertices of~$G$ consistent with the ordering
$$A \setminus X \, < \, X \, < \, Y \, < \, B \setminus Y$$
induces an embedding of $(P_{2^n})^s$ into $G$.
\end{proof}

We shall use a similar argument in order to find $Q \subset Q_n$ in a longer snake. The main complications are that the graph~$H_\S(s)$ may not resemble a path, and that we will need to embed the subgraph $Q \subset Q_n$ as we go along, in order to avoid the sets $D_x$. 

\begin{proof}[Proof of Proposition~\ref{prop:snake}]
Let $\S = \{ M_1, \ldots, M_k \}$, and recall that the graph $H_\S(s)$ is connected, since $\S$ is an $(m,s)$-snake. Consider an arbitrary spanning tree of $H_\S(s)$, and let $W = (w_0, \ldots, w_{2k})$ be a closed walk which traverses every edge of that spanning tree exactly twice. For each $0 \le j < 2k$, let $X_j \subset M_{w_j}$ and $Y_j \subset M_{w_{j+1}}$ be such that $G[X_j,Y_j] = K_{s,s}$, and for convenience define $X_{2k}=\emptyset$. We shall follow the walk $W$, using subsets of the $X_i$ and $Y_i$ at each step, and using the other vertices of $\S$ only when we arrive at a vertex of $H_\S(s)$ for the final time.

To be more precise, set $t = s/4k$ and define
$$T \, = \, \big\{ j \in [2k] \,:\, w_i \ne w_j \textup{ for all $i > j$} \big\}.$$
We embed $Q$ into $\S$ using the following algorithm:
\begin{itemize}
\item[0.] 
 \begin{itemize}
  \item[$(a)$] Choose an order $x_1 < \dots < x_q$ of the vertices of $Q$ such that the size of the corresponding subsets of $[n]$ is increasing, as in Example~\ref{ex:pw}.\smallskip
  \item[$(b)$] Set $j = \ell = 0$, and repeat the following steps until $\varphi(x_q)$ is chosen, or STOP. \smallskip
  \end{itemize}
\item[1.] 
  \begin{itemize}
  \item[$(a)$] Embed $x_{\ell+1},\ldots,x_{\ell+t}$ into $X_j$ one by one, subject to $\varphi(x_i) \not\in D_{x_i}$. \smallskip
  \item[$(b)$] Embed $x_{\ell+t+1},\ldots,x_{\ell+2t}$ into $Y_j$ one by one, subject to $\varphi(x_i) \not\in D_{x_i}$. \smallskip 
  \item[$(c)$] Set $j := j + 1$ and $\ell := \ell+2t$. \smallskip
  \end{itemize}
\item[2.] If $j \in T$ then repeat the following for as long as possible: \smallskip
  \begin{itemize}
  \item[$(a)$] Embed $x_{\ell+1}$ into $M_{w_j} \setminus \big( X_j \cup D_{x_{\ell+1}} \big)$, and set $\ell := \ell + 1$. \smallskip
  \end{itemize}
\item[3.] If $j \le 2k$, then return to Step  1. Otherwise, STOP. 
\end{itemize}
We claim that this procedure is always feasible, and that it gives an embedding of $Q$ into $G[V(\S)]$, i.e., all vertices of $Q$ are embedded and $\varphi$ is an injective homomorphism. To see that it is feasible, simply note that at most $2kt = s/2 < s - \Delta$ vertices of each set $X_j$ are used in the embedding, since at most $t$ are used at each stage. Similarly, at most $2kt$ vertices of $Y_j$ are used before stage $j$, since if $w_i = w_j$ for some $i < j$, then $i \not\in T$. 

To show that the map $\varphi$ given by the algorithm is an embedding of $Q$ into $G[V(\S)]$, we claim first that the algorithm does not terminate before all of the vertices of $Q$ are embedded. To see this, observe that otherwise $j = 2k+1$ at the end of the process, and that therefore we must have used all but at most $s+\Delta$ vertices of $M_j$ for every $j \in [k]$. It follows that
$$q \, > \, |\S| \cdot \big( m - s - \Delta \big),$$
which contradicts~\eqref{eq:prop:snake:conditions}. Finally, to see that $\varphi$ respects the edges of $Q$, simply note that, as before, if $x_a$ and $x_b$ are neighbours in $Q_n$, then $|a - b| \le 2{n \choose n/2} \le t$. Hence we have either $\varphi(x_a) \in X_j$ (for some $j \in [2k]$) and $\varphi(x_b) \in M_{w_j} \cup Y_j$, or $\varphi(x_a) \in Y_j$ and $\varphi(x_b) \in M_{w_{j+1}} \cup X_j$, or $\varphi(x_a) \in M_{w_j} \setminus \bigcup_{i=0}^{2k-1} \big( X_i \cup Y_i \big)$ and $\varphi(x_b) \in M_{w_j}$. It follows that we have indeed found an embedding of $Q$ into $G[V(\S)]$, and so this completes the proof of the proposition.
\end{proof}

\section{A structural decomposition of triangle-free colourings}
\label{sec:decomposition}

The aim of this section is to show that any two-coloured complete graph containing no blue triangles can be split into two parts: a dense part, with few blue edges, and a structured part (a collection of snakes with few blue edges between them). In order to slightly simplify the calculations below, let us set 
\begin{equation}\label{def:d}
d = \log \log \log n + 1.
\end{equation}
We shall prove the following proposition. 

\begin{prop}
\label{prop:decomposition}
Let $n \in \N$ be sufficiently large, and let $G$ be a two-coloured complete graph with no blue triangles, and with $2^n \le v(G) \le 2^{n+2}$. Then there exists a partition of $V(G)$ into sets $C \cup S_1 \cup \cdots \cup S_r$, for some $r \ge 0$, such that the following conditions hold:
\begin{itemize}
\item[$(a)$] $e\big( G_B[C] \big) \le \ds\frac{2^n |C|}{\log\log n}$
\end{itemize}
and, for every $i \in [r]$, there exists $n^{-1/3} \le s_i \cdot 2^{-n} \le n^{-1/4}$ such that
\begin{itemize}
\item[$(b)$] $G_R[S_i]$ contains a spanning $(m,s_i)$-snake $\S_i$, where $m = \Theta\bigg( \ds\frac{2^{n}}{\log \log n} \bigg)$.
\item[$(c)$] $|N_B(v) \cap S_i| \le \ds\frac{s_i}{\log \log n}$ for every $v \in S_{i+1} \cup \cdots \cup S_r$.
\end{itemize}
\end{prop}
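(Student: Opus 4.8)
\textbf{Proof proposal for Proposition~\ref{prop:decomposition}.}

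The plan is to build the decomposition by a greedy ``peeling'' procedure. Starting with $C := V(G)$, I repeatedly ask whether the blue graph inside the current set $C$ is dense. Concretely, if there is a vertex $v \in C$ with $|N_B(v) \cap C| \ge 2^n / \log\log n$ (or some comparable threshold), then — since $G_B$ is triangle-free — the set $N_B(v) \cap C$ induces a red clique in $G$ of size at least $2^n/\log\log n$. This red clique will be the ``core'' around which a new snake is grown: I set $S_r := N_B(v)\cap C$ initially, remove it from $C$, and then try to grow a snake out of it by repeatedly looking for further red cliques of size $m = \Theta(2^n/\log\log n)$ inside the remaining $C$, each joined to the current snake by a copy of $K_{s,s}$. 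When no vertex of $C$ has large blue degree into $C$ any more, we stop; the leftover $C$ satisfies~$(a)$ by a direct double-counting of blue edges (every vertex has blue degree at most $2^n/\log\log n$ into $C$, so $e(G_B[C]) \le 2^n|C|/(2\log\log n)$, which is even better than required).

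The heart of the argument is the construction of a single snake $\S_i$ once a large red clique has been found. Given the current clique (or chain of cliques) of the snake, I need to find another $m$-set $M'$ in the remaining vertex pool such that $G_R[M']$ is a clique and $K_{s,s} \subset G_R[M, M']$ for some already-placed block $M$. The source of all large red cliques is again the triangle-freeness of $G_B$: any vertex $u$ with many blue neighbours in a set $W$ yields a red clique $N_B(u) \cap W$. So the strategy is to show that as long as $C$ still has a vertex of large blue degree into $C$, one can either extend the current snake or else that vertex's blue neighbourhood starts a brand new red clique (hence a new $S_i$). To get the $K_{s,s}$ connection with $s = \Theta(n^{-1/3}2^n)$ between consecutive blocks, I will use a Kővári–Sós–Turán / dependent-random-choice type argument: inside two red cliques of size $\Theta(2^n/\log\log n)$ with very few blue edges between them (which holds because blue degrees are controlled), the bipartite red graph is extremely dense, so it contains $K_{s,s}$ with $s$ polynomial in $2^n$ — comfortably as large as $n^{-1/3} 2^n$. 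The blue-degree condition~$(c)$, that every vertex placed in a later set $S_{i+1}, \dots, S_r$ sends at most $s_i/\log\log n$ blue edges into $S_i$, is arranged by only ever selecting vertices for later blocks that avoid the (few) high-blue-degree vertices into earlier blocks — this is the same ``throw away the bad vertices'' device used in Lemma~\ref{lem:partialass:extend}, and the bound on how many such bad vertices there are follows again from double-counting blue edges using the degree bound.

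I must also check the bookkeeping: that $v(G) \le 2^{n+2}$ together with $m = \Theta(2^n/\log\log n)$ bounds the number $r$ of snakes and the total number of blocks by $O(2^n / m) = O(\log\log n)$, so the cumulative loss from all the ``bad vertex'' removals and from all the $K_{s,s}$ cores is negligible compared to $2^n$ — in particular each snake can be grown until its blocks exhaust (almost) all of $S_i$, giving the \emph{spanning} snake required in~$(b)$. The parameter $s_i$ is chosen freely in the window $[n^{-1/3}2^n, n^{-1/4}2^n]$; since the bipartite red density between blocks is so high, any value of $s$ in this range is attainable, and I simply fix $s_i = n^{-1/3} \cdot 2^n$, say. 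The main obstacle I anticipate is organizing the simultaneous growth of \emph{several} snakes while keeping property~$(c)$ (a global condition coupling different $S_i$) intact: one has to decide the order in which snakes and blocks are created so that ``later'' always means ``chosen from a pool already cleaned of vertices with large blue degree into everything built so far.'' Handling this cleanly — probably by building one entire snake to completion before starting the next, and cleaning the pool at the start of each new snake — is where the real care is needed; the density estimates for finding red cliques and $K_{s,s}$'s are, by contrast, routine given triangle-freeness and the blue-degree control.
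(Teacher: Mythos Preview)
Your proposal has a genuine gap at the step where you produce the $K_{s,s}$ links between consecutive blocks of a snake. You write that between two red $m$-cliques there are ``very few blue edges \dots (which holds because blue degrees are controlled)'', but no blue-degree bound is available here: the proposition has no such hypothesis, and the snakes are precisely the place where vertices of large blue degree end up. Two red $m$-cliques in the active set can have \emph{every} cross-edge blue (a blue $K_{m,m}$ is triangle-free), so the bipartite red graph between them need not be dense at all. Even if it were dense, K\H{o}v\'ari--S\'os--Tur\'an and dependent random choice only yield $K_{s,s}$ with $s=O(\log m)$, not $s$ polynomial in $m$; a random bipartite graph of density $1-\eps$ already shows this is best possible. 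So the route ``dense $\Rightarrow$ large balanced biclique'' cannot reach $s \ge n^{-1/3}2^n$, and your snake-growing step does not go through.

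The paper finds the $K_{s,s}$ links by a different, purely structural mechanism. One first takes a \emph{maximal} family $\M$ of disjoint red $m$-cliques in the active set. The key observation is that if some vertex $v$ has at least $t$ blue neighbours in a clique $M$ and at least $t$ in another clique $M'$, then triangle-freeness forces a red $K_{t,t}$ between $N_B(v)\cap M$ and $N_B(v)\cap M'$. One then records the weights $w(M,M')=\max\{t:K_{t,t}\subset G_R[M,M']\}$ and, since $|\M|=O(\log\log n)$, uses pigeonhole to choose $s_i\in[n^{-1/3}2^n,\,n^{-1/4}2^n]$ so that no weight lies in a gap interval $[s_i/2^{3d+2},\,s_i)$. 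The snakes are the connected components at threshold $s_i$, and $S_i$ is simply the vertex set of one component (so ``spanning'' is automatic). The gap is exactly what drives condition~$(c)$: if $v$ remains in the active set then $|N_B(v)\cap M|<s_i/2^{3d+2}$ for every $M\in\S_i$, and summing over the $O(\log\log n)$ cliques gives $|N_B(v)\cap S_i|\le s_i/\log\log n$; and if $v$ is discarded into $C$, the same gap argument shows it cannot have large blue degree into any clique outside $\S_i$, which is what makes condition~$(a)$ work. Fixing $s_i=n^{-1/3}2^n$ in advance, as you propose, loses this: without the gap one only gets $w(M,M')\ge s_i/2^{3d+2}$ from a common large blue neighbourhood, which does not force $M,M'$ into the same component at threshold $s_i$, and the degree bounds for $(a)$ and $(c)$ collapse.
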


Before proving the proposition, let's give a brief sketch of the proof. Throughout the process (after $j$ steps, say), we shall maintain a partition of $V(G)$ into sets
$$A_j \quad \textup{(for `active'),}  \qquad C = C_1 \cup \cdots \cup C_j \qquad \textup{and} \qquad S = S_1 \cup \cdots \cup S_j$$ 
satisfying, for each $1 \le i \le j$, that every vertex of $C_i$ sends few blue edges into $A_i \cup C_i$, and that each $G_R[S_i]$ contains a spanning $(m,s_i)$-snake. At each step of the process, we shall find sets $C_{j+1} \subset A_j$ and $S_{j+1} \subset A_j$ which maintain these properties. 

In order to do so, we consider a maximal collection $\M$ of disjoint red cliques in $A_j$ of a given size, $m = 2^{n-d} \approx 2^n / \log\log n$. Let $U$ denote the collection of vertices of these cliques, and note that every vertex of $G$ sends at most $m$ blue edges into $A_j \setminus U$. The key observation is that moreover, if any vertex of $G$ sends at least $s$ blue edges into two different cliques of $\M$, then these cliques are linked by a red copy of $K_{s,s}$, and hence lie in a red $(m,s)$-snake. 

It follows that if we partition $U$ into $(m,s)$-snakes, then every vertex can have high blue degree into at most one of these snakes. To finish the proof, we choose $s_{j+1}$ such that for every pair of cliques $K_1,K_2 \in \M$, the largest $s$ such that $K_1$ and $K_2$ are connected by a $K_{s,s}$ is either at least $s_{j+1}$, or at most $s_{j+1} / (\log\log n)^3$. (This is possible by the pigeonhole principle.) We can now choose an arbitrary $(m,s_{j+1})$-snake from our partition with vertex set~$S_{j+1}$, and move the vertices which send many blue edges to $S_{j+1}$ from $A_j$ into $C_{j+1}$. 

We now turn to the technical details of the proof sketched above. 

\begin{proof}[Proof of Proposition~\ref{prop:decomposition}]
Set $A_0 = V(G)$, and let $j \ge 0$. Suppose that we have found integers $s_1,\ldots,s_j$ and sets $A_1,\ldots,A_j$, $C_1,\ldots,C_j$ and $S_1,\ldots,S_j$ such that, for each $1 \le i \le j$, 
$$n^{-1/3} \le s_i \cdot 2^{-n} \le n^{-1/4}$$ 
and
$$V(G) = A_i \cup \big( C_1 \cup \cdots \cup C_i \big) \cup \big( S_1 \cup \cdots \cup S_i \big)$$
is a partition satisfying the following conditions:
\begin{itemize}
\item[$(i)$] $G_R[S_i]$ contains a spanning $(m,s_i)$-snake $\S_i$, where $m = 2^{n-d}$.\smallskip
\item[$(ii)$] $\big| N_B(v) \cap S_i \big| < 2^{-2d} s_i$ for every $v \in A_i$.\smallskip 
\item[$(iii)$] $\big| N_B(v) \cap (A_i \cup C_i) \big| \le 2^{n-d+1}$ for every $v \in C_i$. 
\end{itemize}
We will show how to find an integer $n^{-1/3} \le s_{j+1} \cdot 2^{-n} \le n^{-1/4}$ and sets $C_{j+1} \subset A_j$ and $S_{j+1} \subset A_j$, such that these conditions hold for $j+1$.\medskip

\noindent \textbf{Step 1: Choosing $s_{j+1}$.} Let $\M$ be a maximal collection of disjoint red cliques in $A_j$, each of size~$m = 2^{n-d}$, and suppose first that $|\M| > 0$. In order to choose~$s_{j+1}$, consider the weighted complete graph $K_\M$ on vertex set~$\M$, where the weight of the edge $\{M,M'\}$ is defined to be
\begin{equation}\label{def:weights}
w(M,M') \, = \, \max\Big\{ s \in \N \,:\, K_{s,s} \subset G_R[M,M'] \Big\}.
\end{equation}
That is, $w(M,M')$ is the largest integer~$s \in \N$ such that there exists a copy of $K_{s,s}$ in the red bipartite graph $G_R[M,M']$ induced by the vertex sets of the cliques $M$ and $M'$. We claim that there exists an integer $n^{-1/3} \le x \cdot 2^{-n} \le n^{-1/4}$ such that no edge weights of this graph lie in the interval 
$$I(x) = \bigg[ \frac{x}{2^{3d+2}}, \, x \bigg).$$
To see that such an $x$ exists, observe that $|\M| \le 2^{d+2}$, since $v(G) \le 2^{n+2}$, and therefore $e(K_\M) \le 2^{2d+3}$.  It follows that
\[
(2^{3d+2})^{e(K_\M)} \le \, 2^{(3d+2) 2^{2d+3}} \le \, 2^{(\log \log n)^3} \le \, n^{o(1)},
\]
and hence $x$ exists by the pigeonhole principle. Set $s_{j+1} = x$. 

\medskip

\noindent \textbf{Step 2: Choosing $S_{j+1}$ and $C_{j+1}$.} Consider the graph $H_\M(s_{j+1})$ on vertex set $\M$, with edge set
$$E\big( H_\M(s_{j+1}) \big) \, = \, \bigg\{ \big\{ M,M' \big\} \in {\M \choose 2} \,:\, w(M,M') \ge s_{j+1} \bigg\},$$
where $w(M,M')$ are the weights defined in~\eqref{def:weights}. Note that the connected components of $H_\M(s_{j+1})$ correspond to $(m,s_{j+1})$-snakes, and label these snakes (arbitrarily) as $\Q_1, \ldots, \Q_q$. Set $\S_{j+1} = \Q_1$ and let $S_{j+1}$ be the vertex set of $\S_{j+1}$. 

Now, in order to choose $C_{j+1}$, recall from the sketch that we wish this set to consist of the vertices of $A_j$ which send `many' blue edges into $S_{j+1}$. We therefore define
$$C_{j+1} \, = \, \bigg\{ v \in A_j \setminus S_{j+1} \,:\, \big| N_B(v) \cap M \big| \ge \frac{s_{j+1}}{2^{3d+2}} \mbox{ for some } M \in \S_{j+1} \bigg\}.$$

\medskip

\noindent \textbf{Step 3: Checking the conditions.} We next need to show that the integer $s_{j+1}$ and the partition  
$$V(G) = A_{j+1} \cup \big( C_1 \cup \ldots \cup C_{j+1} \big) \cup \big( S_1 \cup \ldots \cup S_{j+1} \big)$$
satisfy the conditions $(i)$, $(ii)$ and $(iii)$. Note that the conditions for $0 \le i \le j$ are unaffected by our choices at step $j+1$, so it will suffice to consider the case $i = j+1$. 

Condition~$(i)$ follows immediately from the construction, since $S_{j+1}$ is the vertex set of the red $(m,s_{j+1})$-snake $\S_{j+1}$. To see that Condition~$(ii)$ holds, note that $A_{j+1} \subset A_j \setminus C_{j+1}$, and that therefore $| N_B(v) \cap M | < s_{j+1} \cdot 2^{-(3d+2)}$ for every $M \in \S_{j+1}$ and every $v \in A_{j+1}$. It follows that
$$\big| N_B(v) \cap S_{j+1} \big| \, \le \, \sum_{M \in \S_{j+1}} \big| N_B(v) \cap M \big| \, < \, \frac{|S_{j+1}|}{2^{n-d}} \cdot \frac{s_{j+1}}{2^{3d+2}} \, \le \, \frac{s_{j+1}}{2^{2d}}$$ 
for every $v \in A_{j+1}$, as required.  

Finally, let us show that Condition~$(iii)$ holds. We claim that 
\begin{equation}\label{eq:conditionthreeintwoparts}
\bigg| N_B(v) \cap \bigcup_{i = 2}^q V(\Q_i) \bigg| \, \ll \, 2^{n-d} \qquad \textup{and} \qquad \bigg| N_B(v) \cap A_j \setminus \bigcup_{i = 1}^q V(\Q_i) \bigg| \, \le \, 2^{n-d}
\end{equation}
for every $v \in C_{j+1}$. The latter inequality is easy, since no vertex of $G$ sends more than $2^{n-d} - 1$ blue edges into the set $A_j \setminus \bigcup_{i = 1}^q V(\Q_i)$, by the maximality of $\M$. To prove the former bound, we claim  that if~$v \in C_{j+1}$, then 
\begin{equation}\label{eq:conditionthreefirstpart:claim}
|N_B(v)\cap M| < \frac{s_{j+1}}{2^{3d+2}} \quad \textup{for every $M \in \, \bigcup_{i = 2}^q \Q_i$}.
\end{equation}
Indeed, suppose that there exists an $M \in \, \bigcup_{i = 2}^q \Q_i$ with $|N_B(v)\cap M| \ge s_{j+1} \cdot 2^{-(3d+2)}$, and recall that, since $v \in C_{j+1}$, there exists~$M' \in \S_{j+1}$ such that $|N_B(v) \cap M' | \ge s_{j+1} \cdot 2^{-(3d+2)}$. Since~$G_B$ is triangle-free, it follows that every edge between~$N_B(v) \cap M$ and $N_B(v) \cap M'$ is red, so the weight of the edge~$\{M,M'\}$ in~$K_\M$ is at least~$s_{j+1} \cdot 2^{-(3d+2)}$. However, since there are no edges in~$K_\M$ with weight in the interval $I(s_{j+1})$, the weight must in fact be at least~$s_{j+1}$. This implies that~$M$ and~$M'$ are in the same connected component of~$H_\M(s_{j+1})$, a contradiction, and so we have proved~\eqref{eq:conditionthreefirstpart:claim}.

The first inequality in~\eqref{eq:conditionthreeintwoparts} now follows easily, since there are at most $v(G) / 2^{n-d} \le 2^{d+2}$ cliques in $\M$, and so 
$$\bigg| N_B(v) \cap \bigcup_{i = 2}^q V(\Q_i) \bigg| \, \le \, 2^{d+2} \cdot \frac{s_{j+1}}{2^{3d+2}} \, \le \, \frac{2^{n-2d}}{n^{1/4}} \, \ll \, 2^{n-d}$$
for every $v \in C_{j+1}$, as claimed. Combining the two inequalities in~\eqref{eq:conditionthreeintwoparts}, and noting that $A_{j+1} \cup C_{j+1} = A_j \setminus S_{j+1}$, we obtain Condition~$(iii)$.

\medskip

\noindent \textbf{Step 4: Completing the proof.} Since $|A_j|$ is strictly decreasing as long as $|\M| > 0$, we must eventually reach a set $A_j$ such that~$|\M| = 0$. When this happens, we set $C = C_1 \cup \cdots \cup C_j \cup A_j$ and $r = j$. Clearly~$C \cup S_1 \cup \cdots \cup S_r$ is a partition of $V(G)$, and so it only remains to check that the conditions~$(a)$, $(b)$ and~$(c)$ hold.  

Let us begin with Condition~$(c)$, which follows since 
$$|N_B(v) \cap S_i| \, \le \, 2^{-2d} s_i \, \le \, \ds\frac{s_i}{\log \log n}$$
for every $v \in S_{i+1} \cup \cdots \cup S_r \subset A_i$, by property~$(ii)$. Condition~$(b)$ is also easy to see, since each $\S_i$ is a spanning $(m,s_i)$-snake of $G_R[S_i]$ by property~$(i)$, and $m = 2^{n-d} = 2^{n-1} / \log\log n$. Finally, to prove that Condition~$(a)$ holds, we need to show that $e\big( G_B[C] \big) \le 2^{n-d+1}|C|$. To see this, simply note that there are no red cliques of size $2^{n-d}$ in $A_r$, by the maximality of~$\M$, and thus $|N_B(v) \cap A_r| < 2^{n-d}$ for every $v \in V(G)$. Hence, using property~$(iii)$,
\begin{align*}
 e\big( G_B[C] \big) & \, \le \, \sum_{i=1}^r \sum_{v \in C_i} \big| N_B(v) \cap (A_i \cup C_i) \big| + e\big( G_B[A_r] \big) \\
  & \, \le \, \sum_{i=1}^r  2^{n-d+1} |C_i| + 2^{n-d} |A_r| \, \le \, 2^{n-d+1} |C|
\end{align*}
as required. This completes the proof of Proposition~\ref{prop:decomposition}.
\end{proof}

\section{The proof of Theorem~\ref{Ramsey}}\label{proofSec}

Combining the results of the previous three sections, it is now easy to deduce Theorem~\ref{Ramsey}. In brief, we shall apply Proposition~\ref{prop:decomposition} to our two-coloured complete graph $G$, and obtain a partition $C \cup S_1 \cup \cdots \cup S_r$ of $V(G)$ such that $G_B[C]$ is sparse and each $S_j$ contains a red spanning $(m,s_j)$-snake. If $C$ contains more than half of the vertices of $G$, then we shall remove any high degree vertices in $G_B[C]$, and apply Proposition~\ref{prop:dense_embed}. If not, then we shall use Proposition~\ref{prop:snake} in order to find a copy of $Q_n$ in $G_R[S_1 \cup \cdots \cup S_r]$. The details follow.

\begin{proof}[Proof of Theorem~\ref{Ramsey}]
Let $\eps \in (0,1)$, and let $n \ge n_0(\eps)$ be sufficiently large. Let $G$ be a two-coloured complete graph on $(1+\eps)2^{n+1}$ vertices, and suppose that~$G_B$ is triangle-free. We claim that~$G_R$ contains a copy of~$Q_n$. To prove this, we apply Proposition~\ref{prop:decomposition} to the graph~$G$, to obtain a partition $V(G) = C \cup S_1 \cup \cdots \cup S_r$ and integers $s_1,\ldots,s_j$, with $n^{-1/3} \le s_i \cdot 2^{-n} \le n^{-1/4}$ for each $1 \le i \le j$, satisfying conditions $(a)$, $(b)$ and $(c)$ of the proposition. We shall find our copy of $Q_n$ in the larger of the sets $C$ and $S_1 \cup \cdots \cup S_r$.

\medskip

\noindent \textbf{Case 1: $|C| \ge v(G)/2$.} By Proposition~\ref{prop:decomposition}$(a)$, we have
$$e\big( G_B[C] \big) \le \ds\frac{2^n |C|}{\log\log n}.$$
We claim that in this case $Q_n \subset G_R[C]$. We will apply Proposition~\ref{prop:dense_embed}, but first we must remove the high degree vertices from $G_B[C]$. Indeed, set $\gamma = \eps/2$ and
$$C' \, = \, \bigg\{ v \in C \,:\, |N_B(v)\cap C| \ge \frac{2^n}{\log\log\log n} \bigg\},$$
and note that, counting edges, we have $|C'| \ll |C|$. Setting $H = G[C \setminus C']$, it follows that
$$v(H) \, \ge \, \big( 1 + \gamma \big) 2^n \qquad \textup{and} \qquad d_B(u) \, \le \, \frac{2^n}{\log_{(3)}(n)}$$
for every $u \in V(H)$. By Proposition~\ref{prop:dense_embed}, it follows that $Q_n \subset H_R$, as required.

\medskip

\noindent \textbf{Case 2: $|S_1 \cup \cdots \cup S_r| \ge v(G)/2$.} By Proposition~\ref{prop:decomposition}$(b)$ and~$(c)$, for each $i \in [r]$, the graph $G_R[S_i]$ contains a spanning $(m,s_i)$-snake $\S_i$, where $m = \Theta\big( 2^{n} / \log\log n \big)$, and
\begin{equation}\label{eq:bluedegreeintolatersnakes}
|N_B(v) \cap S_i| \le \frac{s_i}{\log \log n}
\end{equation}
for every $v \in S_{i+1} \cup \cdots \cup S_r$. Recall from~\eqref{def:d} the definition of $d$, and note that $|\S_j| = O(2^{d})$. To find an embedding of $Q_n$ into $G_R[S_1 \cup \cdots \cup S_r]$, we will split $Q_n$ into subcubes of size $2^{n-2d}$, and embed them into the snakes using Proposition~\ref{prop:snake}.

Indeed, let us choose an arbitrary assignment
$$\psi \colon \C \to \{\S_1,\ldots,\S_r\},$$
where $\C$ denotes the collection of subcubes of $Q_n$ of co-dimension $2d$, such that 
\begin{equation}\label{eq:assigningsubcubes}
\big| \psi^{-1}(\S_j) \big| \, \le \, \frac{|S_j|}{2^{n-2d}} - 1.
\end{equation}
Note that this is possible since
$$\sum_{j=1}^r \Big( |S_j| - 2^{n-2d+1} \Big) \, \ge \, \big( 1 + \eps \big) 2^n - 2^{n-d+3} \, \ge \, 2^n.$$ 
Let $Q(j)$ denote the subgraph of $Q_n$ induced by the union of the subcubes in $\psi^{-1}(\S_j)$. We embed $Q(j)$ into $\S_j$, for each $j \in [r]$, in reverse order. Indeed, suppose we have successfully defined the embedding $\varphi$ for $Q(r),\ldots,Q(j+1)$; we will show how to embed $Q(j)$ into $\S_j$. 

For each vertex $x \in Q(j)$, let $D_x \subset S_j$ denote the collection of vertices which are forbidden to $x$ by its already-embedded $Q_n$-neighbours, i.e., 
$$D_x \, = \, \bigg\{ y \in S_j \,:\, y \in N_B\big( \varphi(z) \big) \textup{ for some $z \in \bigcup_{i=j+1}^r Q(i)$ such that $z \sim x$ in $Q_n$} \bigg\}.$$
We claim that $|D_x| \ll s_j$ for every $x \in Q(j)$. Indeed, since $Q(j)$ is made up of subcubes of co-dimension $2d$, it follows that $x$ has at most $2d$ already-embedded neighbours. Moreover, by~\eqref{eq:bluedegreeintolatersnakes}, each of these has at most $s_j / \log \log n$ blue neighbours in $S_j$. Since $d \ll \log\log n$, it follows that $|D_x| \ll s_j$, as claimed.

Now, by~\eqref{eq:assigningsubcubes} we have 
$$v\big( Q(j) \big) \, \le \, |S_j| - 2^{n-2d} \, \le \, \big( m - 2s_j \big) |\S_j| \qquad \textup{and} \qquad s_j \gg |\S_j| {n \choose n/2}$$
since $n^{-1/3} \le s_j \cdot 2^{-n} \le n^{-1/4}$ and $|\S_j| = O(2^{d})$. Hence, by Proposition~\ref{prop:snake}, there exists an embedding $\varphi \colon Q(j) \to G[S_j]$ such that $\varphi(x) \not\in D_x$ for every $x \in Q(j)$. 

Since this holds for every $j \in [r]$, and by the definition of $D_x$, it follows that we can embed the entire cube $Q_n$ into $G_R[S_1 \cup \cdots \cup S_r]$. This completes the proof of Theorem~\ref{Ramsey}.
\end{proof}

\end{document}